\newcommand{\eqdef}{\stackrel{\scriptscriptstyle\rm def}{=}}
\newtheorem{theorem}{Theorem}
\newtheorem{proposition}{Proposition}
\newtheorem{lemma}{Lemma}
\newtheorem{remark}{Remark}
\newtheorem{example}{Example}
\newcommand{\beha}{\begin{enumerate}}
\newcommand{\behe}{\end{enumerate}}
\renewcommand{\epsilon}{\varepsilon}
\newcommand{\cM}{\EuScript{M}}
\newcommand{\cP}{\EuScript{P}}
\newcommand{\cN}{\EuScript{N}}
\newcommand{\cU}{\EuScript{U}}
\newcommand{\bR}{{\mathbb R}}
\newcommand{\bC}{{\mathbb C}}
\newcommand{\bZ}{{\mathbb Z}}
\newcommand{\bN}{{\mathbb N}}
\newcommand{\cK}{{\mathcal K}}
                  \def\oc{\overline \C}
\def\1{1\!\!1}
\def\and{\text{ and }}
\def\oc{\overline{\bC}}
\def\Per{{\rm Per}}
\def\Fix{{\rm Fix}}
\def\EPer{{\rm EPer}}
\def\EFix{{\rm EFix}}
\def\card{{\text{card}\,}}
\def\1{1\!\!1}
\renewcommand{\emptyset}{\varnothing}
\title{On the  distribution of periodic orbits
}
\author{Katrin Gelfert}\address{Department of Mathematics, Northwestern University,  2033 Sheridan Road, Evanston, IL 60208-2730, USA}\email{gelfert@pks.mpg.de} \urladdr{http://www.pks.mpg.de/~gelfert/}
\author{Christian Wolf}\address{Department of Mathematics, Wichita State University, Wichita, KS 67260, USA}\email{cwolf@math.wichita.edu}
\urladdr{http://www.math.wichita.edu/~cwolf/}
\begin{document}
\thanks{K.G. is grateful for discussions with A.~Arbieto and M.~Viana.}

\begin{abstract}
Let $f:M\to M$ be a $C^{1+\epsilon}$-map on a smooth Riemannian manifold $M$ and let $\Lambda\subset M$ be a  compact $f$-invariant locally maximal set. In this paper we obtain several results concerning the distribution of the
periodic orbits of $f|\Lambda$. These results are non-invertible and, in particular, non-uniformly hyperbolic versions of  well-known results by Bowen, Ruelle,
 and others in the case of hyperbolic diffeomorphisms.
 We show  that the topological pressure $P_{\rm top}(\varphi)$ can be computed by the values of the potential $\varphi$ on the expanding periodic orbits and also that every hyperbolic ergodic invariant  measure is well-approximated by expanding periodic orbits. Moreover, we prove
that certain equilibrium states are Bowen measures. Finally, we derive a large deviation result
for  the periodic orbits whose time averages are apart from the space average  of a given  hyperbolic invariant measure. 
\end{abstract}
\keywords{}
\subjclass[2000]{}
\maketitle

\section{Introduction}
\subsection{Motivation}
It is a major goal in the theory of 
dynamical systems to determine the mechanisms which create deterministic chaos. One key ingredient causing chaotic behavior is the existence of a large set  of hyperbolic periodic orbits. Here the meaning of a set being large may vary depending on the circumstances. For instance, large can mean that the periodic points are dense in phase space (which is one condition in Devaney's definition of a chaotic dynamical system). In a topological setting it can mean an exponential growth rate of the number of periodic orbits. In a measure-theoretic setting it could signify that these orbits are uniformly distributed indicating maximal entropy.  
The connection between the distribution of the periodic orbits and other topological quantifiers such as entropy and topological pressure is well  understood in the case of uniformly hyperbolic
systems due to pioneer work of Bowen, Ruelle, Walters, Sinai, and others. One reason for  the success when dealing with hyperbolic systems is that the existence of Markov partitions allows one to
 model the system by a shift map over a finite alphabet. We refer to~\cite{Bow:71} for the case of hyperbolic diffeomorphisms and~\cite[Chapter~20]{KatHas:95} for a collection of general results. Although the analysis of further quantifiers such as Lyapunov exponents, fractal dimensions, or Gibbs measures  depends on smoothness properties of the conjugating map, this symbolic description nevertheless can provide a powerful tool. 

Beyond the realm of uniformly hyperbolic systems, symbolic descriptions (even by means of an infinite alphabet) may fail. For particular classes of dynamical systems a description of topological dynamical quantifiers in terms of periodic points can nevertheless be achieved. We refer to \cite{Oli:03}, \cite{OliVia:06} and \cite{OliVia:07} 
for results in this direction.
 In our previous work~\cite{GelWol:08} we treated the case of non-uniformly hyperbolic diffeomorphisms. This work was inspired by the seminal work of  Misiurewicz and Szlenk~\cite{MisSzl:80} and of Katok~\cite{Kat:80}   of finding horseshoes that can be used to gradually approximate the dynamics and dynamical quantifiers ``from inside'' (see also~\cite{KatMes:98} for related results in the case of $C^1$ circle maps). In the present paper we continue and extend this analysis and now study a class of non-uniformly hyperbolic (not-necessarily invertible) $C^{1+\epsilon}$-maps. We show that the topological pressure is determined by the values of the potential on the periodic points, establish the Bowen-measure property for certain equilibrium measures, and obtain large-deviation results for those periodic points whose time-averages differ from the space average of a given hyperbolic measure. We shall now discuss our results in more detail.
 
%---------------------------------------------------------------------------------------------------- 
\subsection{Statement of the results}
%---------------------------------------------------------------------------------------------------- 

 Let $f\colon M\to M$ be a $C^{1+\varepsilon}$ map on a smooth Riemannian manifold $M$, and let $\Lambda\subset M$ be a compact locally maximal invariant set. Let $\varphi\colon\Lambda\to\bR$ be a potential belonging to the class $C^f(\Lambda,\bR)$ (see Section 3 for the precise definition). We show  that for
any sufficiently small  $\alpha>0$ the topological pressure $P_{\rm top}(\varphi)$ is given by
  \begin{equation}\label{erg1}
  P_{\rm top}(\varphi) =   
  \lim_{\ell\to\infty}\limsup_{n\to\infty}\frac{1}{n}\log\left(
  \sum_{x\in \EFix(f^n,\alpha,\ell)} \exp S_n\varphi(x)\right),
  \end{equation}
where $\EFix(f^n,\alpha,\ell)$ denotes a set of fixed points of $f^n$ which have sufficiently strong expansion properties characterized by the exponent $\alpha$ and the parameter $\ell$ (see  Theorem~\ref{Main1} in the text).  

A second focus of our investigations is on existence and construction of equilibrium states (and in particular on measures of maximal entropy). Here an invariant measure $\mu$ is said to be an \emph{equilibrium state} for a continuous potential $\varphi\colon\Lambda\to\bR$ if  
\[P_{\rm  top}(\varphi)=h_{\mu}(f)+\int_\Lambda\varphi \,d\mu.\]
For the particular case $\varphi=0$,  $\mu$ is called a measure of \emph{maximal entropy}.
Recall that (asymptotic entropy-) expansiveness of $f|\Lambda$ guarantees  the upper semi-continuity of the entropy map $\mu\mapsto h_\mu(f)$ (see \cite{Mis:73}) and hence implies the existence of equilibrium states for any continuous potential. In particular, if $f$ is a $C^\infty$-map, then the entropy map is upper semi-continuous (see \cite{New:89}). Moreover, an equilibrium state can be obtained as a limit distribution of a convex combination of measures that are uniformly distributed on each given periodic orbit.
If $f\colon \Lambda\to\Lambda$ is an expansive homeomorphism satisfying the specification property,
this construction yields a unique equilibrium state for any H\"older continuous potential $\varphi$ (see~\cite[Chapter~20]{KatHas:95}).  Beyond such a setting the mere existence of equilibrium states is largely unknown.  

We show that any ergodic invariant measure $\mu$ with positive entropy and positive Lyapunov exponents is well-approximated by weighted distributions on expanding periodic orbits (see Theorem~\ref{thwell}). In particular, if $\mu$ is the unique equilibrium state of a certain potential, then it is a Bowen measure (see Theorem \ref{thmut}), that is, $\mu$ is a weak$\ast$ accumulation point of the probability measures $(\mu_n)_{n\in \bN}$ given by
\begin{equation}
\mu_n=\frac{1}{n}\sum_{i=1}^{n-1}f^i_*\nu_n, \quad\text{ where }
\nu_n=\frac{1}{A_{n_k}}\sum_{x\in \EFix(f^{n_k},\alpha,k)}
\delta_x\exp S_{n_k}\varphi(x)
\end{equation}
and $A_{n_k}$ denotes the normalizing constant.
We further discuss dynamical systems that are intrinsically ergodic, for which we can verify existence of equilibrium states of low-varying potentials. Here we are inspired by the examples in~\cite{Oli:03}.

We conclude our analysis with an investigation of the distribution of periodic orbits that are far from a given dynamical behavior. More precisely, we consider periodic orbits whose Birkhoff averages differ from the expected value of a given ergodic measure $\mu$, namely,
\[
\left\lvert \lim_{n\to\infty}\frac{1}{n}\sum_{k=0}^{n-1}\varphi(f^k(x))- \int\varphi \,d\mu\right\rvert 
		>\varepsilon.
\]
This analysis is done by means of a rate function which measures the ``distance'' of a periodic-orbit measure to the set of (generalized) equilibrium states.
One particular large deviation result (see Theorem \ref{theorem:ld}) states that for an ergodic measure $\mu$ with positive entropy and positive Lyapunov exponents  and a continuous potential $\varphi$ for $\alpha>0$ and  $\delta>0$ we have
	\begin{multline} 
	\lim_{\ell\to\infty}\limsup_{n\to\infty}\frac{1}{n}\log
	{\rm card}\left\{x\in\EFix(f^n,\alpha,\ell) \colon 
	\left\lvert \frac{1}{n}S_n\varphi(x) - \int_\Lambda\varphi \,d\mu\right\rvert \ge \delta \right\}\\
	        \le 
		\sup\left\{\widehat h_\nu(f)\colon 
	        \nu\in\cM, \left\lvert \int_\Lambda\varphi \,d\nu-\int_\Lambda\varphi \,d\mu\right\rvert
	        \ge \delta \right\},
	\end{multline}
where $\widehat h_\nu(f)$ denotes the generalized entropy of $\nu$ (which is equal to $h_\nu(f)$ if and only if the entropy map is upper semi-continuous).

This paper is organized as follows. In Section 2 we review some
basic concepts and results from smooth ergodic theory. 
Section 3 is devoted to the proof of   formula \eqref{erg1} showing that the topological pressure can be calculated in terms of the values of the potential on the periodic orbits.
 In Section 4 we analyze  how hyperbolic measures can be approximated by expanding periodic orbits. In particular, we obtain that certain  equilibrium measures are Bowen measures. Finally, in 
Section 5 we derive large deviation results concerning periodic orbits whose time averages differ from the space average of a given hyperbolic measure.

%---------------------------------------------------------------------------
\section{Notions from smooth ergodic theory}
%---------------------------------------------------------------------------

Let $M$ be a smooth Riemannian manifold and let
$f\colon M\to M$ be a $C^{1+\epsilon}$ map. We consider a compact locally
maximal invariant set $\Lambda\subset M$. Here \emph{locally
maximal} means that there exists an open neighborhood $U\subset M$
of $\Lambda$ such that $\Lambda=\bigcap_{n\in\bZ} f^n(U)$. To avoid trivialities we will always assume
that $h_{{\rm top}}(f|\Lambda)>0$, where $h_{{\rm top}}$ denotes the topological entropy of the map. This rules out the case that $\Lambda$ is
only a periodic orbit. Given $x\in \Lambda$ and $v\in T_xM$, we
define the \emph{Lyapunov exponent} of $v$ at $x$ (with respect to
$f$) by
\begin{equation}\label{deflya}
\lambda(x,v)\eqdef\limsup_{n\to\infty}\frac{1}{n}\log\,\lVert df^n(x)(v)\rVert
\end{equation}
with the convention that $\log 0=-\infty$.
For each $x\in \Lambda$ there exist a positive integer $s(x)\le \dim M$,
real numbers $\chi_1(x)< \cdots < \chi_{s(x)}(x)$, and linear spaces
$\{0\}=E^{0}_x\subset \cdots \subset E^{s(x)}_x=T_xM$ such that for
$i=1$, $\ldots$, $s(x)$ we have
\[
E^{i}_x=\{v\in T_xM\colon \lambda(x,v)\le \chi_i(x)\},
\]
and $\lambda(x,v)=\chi_i(x)$ whenever $v\in E^{i}_x\setminus 
E^{i-1}_x$.

We will count the values of the Lyapunov exponents
$\chi_i(x)$ with their multiplicities, i.e. we consider the numbers
\[
\lambda_1(x)\le\cdots\le\lambda_{\dim M}(x),
\]
where  $\lambda_j(x)=\chi_i(x)$ for each $j\in\{\dim E^{i-1}_x+1,\cdots,\dim
E^i_x\}$.

Let $\cM$ denote the set of all Borel $f$-invariant probability measures on
$\Lambda$ endowed with the weak$*$ topology. This makes $\cM$
a compact convex space. Moreover, denote by $\cM_{\rm E}\subset \cM$ the subset
of ergodic measures.
By the Oseledets theorem, given $\mu\in \cM$ the set of Lyapunov regular points (i.e. the set of points where the limit superior in~\eqref{deflya} is actually a limit) has full measure and
$\lambda_i(\cdot)$ is $\mu$-measurable. We denote by
\begin{equation}\label{deflyame}
\lambda_i(\mu)\eqdef\int\lambda_i(x) d\mu(x)
\end{equation}
the Lyapunov exponents of the measure $\mu$.
Note that if $\mu\in \cM_{\rm E}$ then
$\lambda_i(.)$ is constant $\mu$-a.e., and therefore, the corresponding value
coincides with $\lambda_i(\mu)$.
For $\mu\in \cM$
set
\[
\chi(\mu)\eqdef\min_{i=1,\ldots,\dim M} \lambda_i(\mu) =\lambda_1(\mu).
\]
Furthermore, we define $\cM^+=\{\mu\in \cM\colon \chi(\mu)>0\}$ and
$\cM^+
_{\rm E}= \cM^+
\cap \cM_{\rm E}$.

 We denote
by $\Fix(f)$ the set of fixed points of $f$. Moreover, we
denote by $\Per(f)=\bigcup_{n\in\bN} \Fix(f^n)$ the set of periodic points
of $f$. For $x\in \Fix(f^n)$ we have that $\lambda_i(x)=\frac{1}{n}\log
|\delta_i|$, where $\delta_i$ are the eigenvalues of $df^n(x)$.
We say a periodic point $x$ is \emph{expanding} if $\lambda_1(x)>0$. Let $\EFix(f^n)$ denote the fixed points of $f^n$ which are expanding. Hence,
$\EPer(f)=\bigcup_{n\in \bN} \EFix(f^n)$ is the set of all expanding periodic points.

Given $\ell\in \bN$ and $\alpha>0$ we define
\begin{equation}\label{ne}
\Lambda_{\alpha,\ell}=\{x\in M\colon\lVert (df^k(x))^{-1}\rVert^{-1} \geq \frac{1}{\ell}e^{k\alpha} \text{ and all }k\in\bN\}.
\end{equation}
Here  $\lVert A^{-1}\rVert^{-1}$ is minimum norm of a linear transformation $A$.
Moreover, we say that a compact forward invariant set $K\subset M$ is  \emph{uniformly expanding} if there exist constants  $\ell\in\bN$ and $\alpha>0$ such that $K\subset \Lambda_{\alpha,\ell}$.
For convenience we sometimes also refer to relative compact forward invariant sets contained in some $\Lambda_{\alpha,\ell}$ as uniformly expanding sets.
We denote by $\chi(K)$ the largest $\alpha>0$ such that $K\subset \Lambda_{\alpha,\ell}$ for some $\ell\in\bN$.
It is easy to see that if $x\in \EFix(f^n)$ then there exists  $\ell=\ell(x)\in\bN$ such that for all
integers $k\ge 0$ and $0\le i\le n-1$ we have
\begin{equation}\label{eqhi}
\frac{1}{\ell}e^{k\lambda_{1}(x)} \leq \lVert (df^k(f^i(x)))^{-1}  \rVert^{-1}.
\end{equation}
For $\ell\in\bN, 0<\alpha$ and $n\in\bN$ we define
\begin{multline}\label{eqexpand}
%\begin{split}
\EFix(f^n,\alpha,\ell) = \{x\in \EFix(f^n)\colon \lVert(df^{-k}(f^i(x)))^{-1}\rVert^{-1}
\ge \frac{1}{\ell} e^{k\alpha}\colon\\
 \text{ for all } k\in \bN \text{ and } 0\le i\le n-1 \}.
%\colon
%\lVert(Df^{k}(f^i(x))\rVert\ge c e^{k\alpha}\\
% \text{ for all } k\ge 1 \text{ and all } 0\le i < n \}.
%\end{split}
\end{multline}
It follows that, if  $\alpha\geq \alpha'$, $\ell\le \ell'$, then
\begin{equation}\label{ni}
\EFix(f^n,\alpha,\ell) \subset \EFix(f^n,\alpha',\ell')
\end{equation}
and
\begin{equation}
\EPer(f)=\bigcup_{\alpha>0}\bigcup_{\ell\ge 1}\bigcup_{n=1}^\infty
\EFix(f^n,\alpha,\ell).
\end{equation}
We will frequently need the following construction of uniformly expanding sets. 
Let $\alpha>0$ and $\ell\in \bN$ such that $\EFix(f^n,\alpha,\ell)\not=\emptyset$ for some $n\in \bN$. We define
\begin{equation}\label{wagner}
K = K_{\alpha,\ell} 
= \overline{\bigcup_{n=1}^\infty \EFix(f^n,\alpha,\ell)} .
\end{equation}
A simple continuity argument shows that $K$ is a uniformly expanding set.
Furthermore, for every $n\in\bN$  we have 
\begin{equation}\label{eqsi}
  \EFix(f^n)\cap K = \EFix(f^n,\alpha,\ell).
\end{equation}

Next,  we introduce a version of topological pressure which is entirely determined by the values of the potential on the expanding periodic points.

Let us first recall the classical topological pressure. Let
$(\Lambda,d)$ be a compact metric space and let $f\colon
\Lambda\to \Lambda$ be a continuous map. For $n \in {\mathbb N}$ we
define a new metric $ d_n $ on $ \Lambda$ by $
d_n(x,y)=\max_{k=0,\ldots ,n-1} d(f^k(x),f^k(y))$. A set of points
$\{ x_i\colon i\in I \}\subset \Lambda$ is called
\emph{$(n,\varepsilon)$-separated} (with respect to $f$) if
$d_n(x_i,x_j)> \varepsilon$ holds for all $x_i,x_j$ with $x_i \ne
x_j$. Fix for all $\varepsilon>0$ and all $n\in\bN$ a maximal
(with respect to the inclusion) $(n,\varepsilon)$-separated set
$F_n(\epsilon)$. The \emph{topological pressure} (with respect to $f|\Lambda$) is a mapping
$ P_{\rm top}(f|\Lambda,\cdot)\colon C(\Lambda,\bR)\to \bR$  defined by
\begin{equation}\label{defdru}
  P_{\rm top}(f|\Lambda,\varphi) = \lim_{\varepsilon \to 0}
            \limsup_{n\to \infty}
            \frac{1}{n} \log \left(\sum_{x\in F_n(\epsilon)}
            \exp S_n\varphi(x) \right),
\end{equation}
where
\begin{equation}\label{eqsn}
S_n\varphi(x)=\sum_{k=0}^{n-1}\varphi(f^k(x)).
\end{equation}
Recall that
$h_{\rm top}(f|\Lambda)=P_{\rm top}(f|\Lambda,0)$. We simply write $P_{\rm
  top}(\varphi)$ and $h_{\rm top}(f)$ if there is no confusion about $f$ and
$\Lambda$.
Note that the definition of $P_{\rm top}(\varphi)$ does not depend on the choice of the
sets $F_n(\epsilon)$ (see~\cite{Wal:81}).
The topological pressure satisfies the
well-known variational principle, namely,
\begin{equation}\label{eqvarpri}
P_{\rm top}(\varphi)= 
\sup_{\mu\in \cM} \left(h_\mu(f)+\int_\Lambda \varphi\,d\mu\right).
\end{equation}
Furthermore, the supremum in~\eqref{eqvarpri} can be replaced by
the supremum taken only over all $\mu\in\cM_{\rm E}$. 

We now introduce a \emph{pressure} which is entirely defined  by the
values of $\varphi$ on the expanding periodic points. Let $\varphi\in
C(\Lambda, \bR)$ and let $0<\alpha$, $\ell\in\bN$.
Define
\begin{equation}
 Q_{\rm EP}(\varphi,\alpha,\ell,n) =
\sum_{x\in \EFix(f^n,\alpha,\ell)} \exp S_n\varphi(x)
\end{equation}
if $\EFix(f^n,\alpha,\ell)\ne\emptyset$ and
\begin{equation}
Q_{\rm EP}(\varphi,\alpha,\ell,n)=
\exp\left(n\min_{x\in\Lambda} \varphi(x)\right)
\end{equation}
otherwise. Furthermore, we define
\begin{equation*}
P_{\rm EP}(\varphi,\alpha,\ell) = \limsup_{n\to\infty}
                 \frac{1}{n}\log Q_{\rm EP}(\varphi,\alpha,\ell,n).
\end{equation*}
It follows from the  definition that if
$\EFix(f^n,\alpha,\ell)\not=\emptyset$ for some $n\in\bN$
then this is already true for infinitely many $n\in\bN$. Therefore,
in the case when $\EFix(f^n,\alpha,\ell)\not=\emptyset$ for
some $n\in\bN$ then $P_{\rm EP}(\varphi,\alpha,\ell)$ is
entirely determined by the values of $\varphi$ on
$\bigcup_{n\in\bN}\EFix(f^n,\alpha,\ell)$.

We will need the following classical result (see for example~\cite{Ru}).

\begin{proposition}\label{ha}
 Let $f\colon M\to M$ be a $C^{1+\epsilon}$ map and let $K\subset M$ be
 a compact uniformly expanding  set of $f$. Then for every $\varphi\in C(K,\bR)$ we have,
 \begin{equation}\label{eqdruck}
 \limsup_{n\to\infty}\frac{1}{n}\log\left(\sum_{x\in\Fix(f^n)\cap K}\exp
 S_n\varphi(x)\right) \le P_{\rm top}(f|K,\varphi).
 \end{equation}
 Furthermore, if $f|K$ is topologically mixing then we have
 equality in~\eqref{eqdruck}, and the limsup is in fact a limit.
\end{proposition}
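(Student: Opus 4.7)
The plan is to handle the inequality and the equality separately, exploiting that a compact uniformly expanding invariant set enjoys the classical structure theory of expanding repellers.

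For the inequality, the key point is to realise $\Fix(f^n)\cap K$ as an $(n,\delta_0)$-separated set with $\delta_0>0$ independent of $n$. Since $K\subset\Lambda_{\alpha,\ell}$ for some $\alpha>0$ and $\ell\in\bN$, the differential $df$ is invertible at every $x\in K$ with $\lVert(df(x))^{-1}\rVert^{-1}\ge e^\alpha/\ell$. By continuity of $df$ and compactness of $K$ this lower bound persists on a neighbourhood $U\supset K$, on which $f$ is a local diffeomorphism. Applying the mean value inequality to local inverse branches yields $c>1$ and $\delta_0>0$ with $d(f(x),f(y))\ge c\cdot d(x,y)$ whenever $x,y\in U$ satisfy $d(x,y)\le\delta_0$, so $f|K$ is expansive with constant $\delta_0$. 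If $x,y\in\Fix(f^n)\cap K$ satisfy $d_n(x,y)\le\delta_0$, then by $n$-periodicity $d(f^k(x),f^k(y))\le\delta_0$ for every $k\ge 0$, and the expansion estimate forces $x=y$. Hence $\Fix(f^n)\cap K$ is $(n,\delta_0)$-separated and can be extended to a maximal $(n,\delta_0)$-separated set in $K$. Taking $\frac{1}{n}\log$ and $\limsup_{n\to\infty}$ yields \eqref{eqdruck}, using that $\delta\mapsto\limsup_n\frac{1}{n}\log\sum_{F_n(\delta)}\exp S_n\varphi$ is non-decreasing as $\delta\to 0$ and that its value at any $\delta\le\delta_0$ already equals $P_{\rm top}(f|K,\varphi)$ by expansiveness.

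For the equality when $f|K$ is topologically mixing, the plan is to invoke the specification property, which is classical for topologically mixing uniformly expanding sets (alternatively, construct a Markov partition and reduce to a mixing subshift of finite type). Fix $\epsilon\in(0,\delta_0/4)$ and a maximal $(n,\epsilon)$-separated set $F_n(\epsilon)\subset K$. Specification at shadowing scale $\epsilon/3$ supplies a gap $N=N(\epsilon)$ and, for each $x\in F_n(\epsilon)$, a periodic point $p_x\in\Fix(f^{n+N})\cap K$ whose orbit $(\epsilon/3)$-shadows the segment $x,f(x),\dots,f^{n-1}(x)$; distinct points $x\ne y$ in $F_n(\epsilon)$ produce distinct $p_x,p_y$ by the triangle inequality on $d_n$. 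Uniform continuity of $\varphi$ provides a modulus $\omega$ with $\omega(\epsilon)\to 0$ and $|S_n\varphi(p_x)-S_n\varphi(x)|\le n\,\omega(\epsilon)$, while the remaining $N$ terms of $S_{n+N}\varphi(p_x)$ are bounded below by $N\min_K\varphi$. Summing, taking $\frac{1}{n+N}\log$, then $\liminf_{n\to\infty}$, and finally $\epsilon\to 0$, yields
\[
\liminf_{m\to\infty}\frac{1}{m}\log\sum_{y\in\Fix(f^m)\cap K}\exp S_m\varphi(y)\ \ge\ P_{\rm top}(f|K,\varphi).
\]
Combined with the upper bound from the previous paragraph this forces equality and upgrades the $\limsup$ to a genuine limit.

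The main obstacle is the passage from the pointwise, tangent-level uniform expansion defining $\Lambda_{\alpha,\ell}$ to distance-level expansiveness, a local-diffeomorphism structure, and specification (or equivalently a Markov partition) on $K$. Once this is in hand, the remaining estimates are routine. Since the proposition is invoked as a classical result citing Ruelle's thermodynamic formalism, I would cite the standard construction of Markov partitions for (mixing) uniformly expanding sets and the associated specification/trace formulas rather than reprove them in detail.
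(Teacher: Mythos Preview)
The paper does not prove this proposition: it is stated as a classical result with a reference to Ruelle's \emph{Thermodynamic Formalism}, so there is no argument in the paper to compare your proposal against. Your sketch is a reasonable outline of how such a result is established and matches the standard treatment (expansiveness to get the upper bound, specification or Markov partitions in the mixing case to get the lower bound and the existence of the limit).

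One small point worth tightening: from $K\subset\Lambda_{\alpha,\ell}$ you only get $\lVert(df(x))^{-1}\rVert^{-1}\ge e^{\alpha}/\ell$, which need not exceed $1$, so you cannot in general extract $c>1$ for a \emph{single} iterate of $f$. What you do have is $\lVert(df^k(x))^{-1}\rVert^{-1}\ge e^{k\alpha}/\ell$ for all $k$, so some fixed iterate $f^N$ is uniformly expanding in the metric sense on a neighbourhood of $K$; expansiveness of $f|K$ (with some constant $\delta_0$) follows from this, and that is all you need for the separated-set argument. The rest of your plan is sound, and your closing remark---that in the context of the paper one should simply cite the classical structure theory rather than reprove it---is exactly what the authors do.
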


%-----------------------------------------------------------------------------------
\section{Topological Pressure and expanding periodic points }\label{sec:3}
%-----------------------------------------------------------------------------------

In this section we prove a theorem which is closely related to a result by Bowen~\cite{Bow:71}  concerning the relationship
between the topological pressure and the value of the potential on the expanding periodic orbits. A related  result was proved in~\cite{GelWol:08} in the context of non-uniformly hyperbolic diffeomorphisms.

In the following, $f:M\to M$ is a $C^{1+\varepsilon}$ map on a smooth Riemannian manifold $M$ and $\Lambda\subset M$ is a compact locally maximal invariant set.
Katok established the remarkable result that every hyperbolic invariant measure of a diffeomorphism can be  in a particular sense approximated by  uniformly hyperbolic horseshoes (see \cite[Theorem~S.5.9]{KatHas:95} for the precise statement). We will need a version of this result
in the context of non-uniformly expanding maps (see~\cite{Buz:} and \cite{Gel:}).  

\begin{proposition}\label{buthm}
	Let  $\mu\in\cM_E^+$ with $h_\mu(f)>0$ and let $\varphi_1$,$\ldots$, $\varphi_m\in C(\Lambda,\bR)$. 
	Then for every $\varepsilon>0$ there exists a compact uniformly expanding set $\Lambda_\varepsilon\subset \Lambda$ such that $f|\Lambda_\varepsilon$ is topologically conjugate to a mixing subshift of finite type  and satisfies 
	\begin{itemize}
	\item[(a)] $\chi(\Lambda_\varepsilon)>\lambda_1(\mu)-\varepsilon$,\vspace{0.1cm}
	\item [(b)] $\displaystyle 
	h_{\rm top}(f|\Lambda_\varepsilon)> h_\mu(f)-\varepsilon$, and\vspace{0.1cm}
	\item [(c)] for all $i=1,...,m$ and all $x\in\Lambda_\varepsilon$,
	\begin{equation}\label{appr}
		\lim_{n\to\infty}
		\left\lvert \frac{1}{n}\sum_{k=0}^{n-1}\varphi_i(f^k(x))- 
			\int_\Lambda\varphi_i \,d\mu\right\rvert 
		<\varepsilon.
	\end{equation}
	\end{itemize}
\end{proposition}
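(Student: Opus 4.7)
The plan is to mimic Katok's horseshoe construction, but using Pesin blocks of uniform \emph{forward} expansion in place of the hyperbolic blocks of the invertible setting. The three required properties are produced in parallel: (a) by the Pesin expansion rate, (b) by a cardinality count via Shannon--McMillan--Breiman, and (c) by Birkhoff's ergodic theorem plus uniform continuity of the $\varphi_i$.

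First I would set up the Pesin structure. Since $\mu\in\cM_E^+$ is ergodic and $\lambda_1(\mu)>0$, the Oseledets theorem combined with a Pesin-theoretic exhaustion yields, for each $\varepsilon>0$, an integer $\ell_0$ and a measurable set $\Gamma\subset\Lambda$ with $\mu(\Gamma)>1-\varepsilon/4$ on which
\[
\lVert (df^k(x))^{-1}\rVert^{-1}\ \geq\ \ell_0^{-1}\,e^{k(\lambda_1(\mu)-\varepsilon/2)}
\qquad \text{for every } x\in\Gamma,\ k\in\bN.
\]
This uniform forward expansion, together with the $C^{1+\varepsilon}$ regularity, is enough to define local inverse branches of $f^n$ with uniformly bounded distortion on a sufficiently small scale. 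Next I would combine Birkhoff's ergodic theorem and Shannon--McMillan--Breiman to harvest many separated orbits with prescribed time averages. Fix a finite Borel partition $\xi$ of small diameter $\delta>0$ with $h_\mu(f,\xi)>h_\mu(f)-\varepsilon/4$, and let $\Gamma_N\subset\Gamma$ be the subset of $x$ for which, for every $n\geq N$, one has $\mu(\xi^n(x))\leq e^{-n(h_\mu(f)-\varepsilon/4)}$ and simultaneously $\lvert\tfrac{1}{n}S_n\varphi_i(x)-\int_\Lambda\varphi_i\,d\mu\rvert<\varepsilon/2$ for every $i=1,\dots,m$. For large $N$, $\mu(\Gamma_N)>1-\varepsilon/4$, and a standard counting argument on atoms of $\xi^n$ supplies, for all large $n$, at least $e^{n(h_\mu(f)-\varepsilon/2)}$ points in $\Gamma_N\cap f^{-n}(\Gamma_N)$ that are pairwise $(n,\delta)$-separated.

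Now I would build the horseshoe. Pick a small ball $R$ around an accumulation point of the selected orbits inside $\Gamma$. By the Pesin expansion and bounded distortion, each selected orbit returning to $R$ at time $n$ determines a local inverse branch $g_j$ of $f^n$ mapping $R$ strictly into itself with contraction ratio at most $\ell_0\,e^{-n(\lambda_1(\mu)-\varepsilon/2)}$. The maximal $f^n$-invariant set
\[
\bigcap_{k\geq 0}\ \bigcup_{(j_0,\dots,j_{k-1})} g_{j_0}\circ\cdots\circ g_{j_{k-1}}(R)
\]
is a Cantor set on which $f^n$ is topologically conjugate to a one-sided full shift on the chosen alphabet; taking the $f$-orbit of this set and passing to an irreducible component (replacing $n$ by a further multiple if necessary) produces the required compact uniformly expanding $\Lambda_\varepsilon$ conjugate to a mixing subshift of finite type. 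Property (a) is inherited from the Pesin expansion rate, property (b) from the cardinality estimate $e^{n(h_\mu(f)-\varepsilon/2)}$ of branches at depth $n$, and property (c) holds because every orbit in $\Lambda_\varepsilon$ concatenates initial $n$-segments of selected orbits whose Birkhoff sums of each $\varphi_i$ are within $\varepsilon/2$ of $\int_\Lambda\varphi_i\,d\mu$, the residual discrepancy being absorbed by the shrinking contractions and uniform continuity of the $\varphi_i$.

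The main obstacle is the horseshoe construction itself: in the invertible case one relies on hyperbolic shadowing in both directions, while here I must replace that by a direct graph-transform argument built on the contracting inverse branches of $f^n$. Establishing uniformity of the distortion bounds independently of the branch depth, and showing that the resulting invariant set is a genuine Cantor horseshoe rather than a pseudo-horseshoe, is the core technical step; the mixing property of the coded subshift of finite type is then obtained by restricting to an irreducible component, at the cost of replacing $n$ by a multiple and slightly worsening the constants, which is harmless since $\varepsilon$ was arbitrary.
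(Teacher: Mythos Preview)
The paper does not give its own proof of this proposition; it is quoted as a known result, the non-uniformly expanding analogue of Katok's horseshoe theorem, with references to Buzzi and to Gelfert. Your sketch is precisely the strategy those works follow: Pesin blocks giving uniform forward expansion, Shannon--McMillan--Breiman together with Birkhoff's theorem to extract exponentially many $(n,\delta)$-separated points with controlled ergodic averages, and then an iterated-function-system construction via the contracting inverse branches of $f^n$. So there is nothing to compare against in the paper itself, but your outline matches the cited literature.

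One point in your outline deserves care. Your selected set lies in $\Gamma_N\cap f^{-n}(\Gamma_N)$, but in order to obtain inverse branches sending a fixed ball $R$ into itself you also need the orbits to start and end in the \emph{same} small region; this is arranged by a further pigeonhole on a fine cover of $\Gamma$, which costs only a factor independent of $n$ in the cardinality count and is therefore harmless for the entropy estimate (b). The distortion and genuine-horseshoe issues you flag at the end are indeed the technical core, and are exactly what the cited references supply. Note also that since $\mu\in\cM_{\rm E}^+$ forces $df$ to be invertible $\mu$-a.e.\ (a singular $df$ would produce a Lyapunov exponent equal to $-\infty$), the required local inverse branches exist on a neighborhood of the Pesin block even though $f$ is not assumed globally to be a local diffeomorphism.
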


We now introduce a natural class of potentials. For $\varphi\in
C(\Lambda,\bR)$ set
\begin{equation}\label{defal}
\alpha(\varphi)\eqdef
P_{\rm top}(\varphi)-\sup_{\nu\in\cM}\int_\Lambda\varphi
\, d\nu.
\end{equation}
We say that a potential $\varphi$ belongs to $C^f(\Lambda,\bR)$ if
\begin{enumerate}
\item [(a)] $\alpha(\varphi)>0$; \item [(b)] there exist $0<\delta(\varphi)<\alpha(\varphi)$ and a
sequence $(\mu_n)_{n\in\bN}\subset \cM_{\rm E}^+$ 
   such that
  $\chi(\mu_n)>\delta(\varphi)$ for every $n\in\bN$ 
and
  $h_{\mu_n}(f)+\int_\Lambda \varphi\,d\mu_n \to P_{\rm top}(\varphi)$ as
  $n\to\infty$.
\end{enumerate}

\begin{remark}{\rm 
(i) Note that $\alpha(\varphi)\geq 0$, and $\alpha(\varphi)>0$ if and only if
$\varphi$ has no equilibrium state with zero entropy. \\
(ii) We note that if $\varphi$ has an  equilibrium
measure $\mu_\varphi\in \cM_E^+$ then we can simply choose  the constant sequence
$\mu_n=\mu_\varphi$ in (b).\\
(iii) Given $\rho>0$, $\varphi\in C(\Lambda,\bR)$ is said to have
\emph{$\rho$-low variation} if $\max_{x\in\Lambda}\varphi(x)<P_{\rm
  top}(\varphi)-\rho \,h_{\rm top}(f)$. In this case we clearly have
$\alpha(\varphi)\ge \rho \,h_{\rm top}(f)$, and hence property (a) holds since, by assumption,  $h_{\rm top}(f)>0$.
}\end{remark}

We will need the following auxiliary result. 

\begin{proposition}\label{li}
  Let $\varphi\in C(\Lambda,\bR)$. Then for all $\mu\in \cM_{\rm E}^+$ and for all
  $0<\alpha<\chi(\mu)$ we have 
  \begin{equation}\label{eqws}
  h_\mu(f)+\int_\Lambda\varphi \, d\mu \le
  \lim_{\ell\to \infty} P_{\rm EP}(\varphi,\alpha,\ell).
  \end{equation}
\end{proposition}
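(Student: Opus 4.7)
My plan is to approximate $\mu$ from below by a compact uniformly expanding set $\Lambda_\varepsilon\subset\Lambda$ on which $f$ is topologically conjugate to a mixing subshift of finite type (supplied by Proposition~\ref{buthm}), use the mixing case of Proposition~\ref{ha} to express $P_{\rm top}(f|\Lambda_\varepsilon,\varphi)$ as a limit of sums of $\exp S_n\varphi$ over $\Fix(f^n)\cap\Lambda_\varepsilon$, and observe that those periodic points all sit in $\EFix(f^n,\alpha,\ell)$ for some $\ell$. The variational principle on $\Lambda_\varepsilon$, combined with the Birkhoff-average bound~\eqref{appr}, will then push $h_\mu(f)+\int_\Lambda\varphi\,d\mu$ below $P_{\rm top}(f|\Lambda_\varepsilon,\varphi)$ (up to $\varepsilon$), yielding~\eqref{eqws} after passing to the limit.

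First I assume $h_\mu(f)>0$ and fix $\varepsilon\in(0,\chi(\mu)-\alpha)$. Applying Proposition~\ref{buthm} to $\mu$ with the single test potential $\varphi_1=\varphi$ produces $\Lambda_\varepsilon\subset\Lambda$ with $\chi(\Lambda_\varepsilon)>\chi(\mu)-\varepsilon>\alpha$, $h_{\rm top}(f|\Lambda_\varepsilon)>h_\mu(f)-\varepsilon$, and~\eqref{appr} valid on $\Lambda_\varepsilon$. From $\chi(\Lambda_\varepsilon)>\alpha$ I select $\ell_0\in\bN$ with $\Lambda_\varepsilon\subset\Lambda_{\alpha,\ell_0}$; forward invariance of $\Lambda_\varepsilon$ then ensures $\Fix(f^n)\cap\Lambda_\varepsilon\subset\EFix(f^n,\alpha,\ell_0)$ for every $n$, whence
\[
\sum_{x\in\Fix(f^n)\cap\Lambda_\varepsilon}\exp S_n\varphi(x)\le Q_{\rm EP}(\varphi,\alpha,\ell_0,n).
\]
The mixing case of Proposition~\ref{ha} turns this pointwise estimate into
\[
P_{\rm top}(f|\Lambda_\varepsilon,\varphi)\le P_{\rm EP}(\varphi,\alpha,\ell_0)\le\lim_{\ell\to\infty}P_{\rm EP}(\varphi,\alpha,\ell),
\]
where the last step uses the monotonicity~\eqref{ni}.

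For the matching lower bound on $P_{\rm top}(f|\Lambda_\varepsilon,\varphi)$, I let $\nu$ be a measure of maximal entropy for $f|\Lambda_\varepsilon$ (which exists by the conjugacy with a mixing subshift of finite type). Integrating~\eqref{appr} against $\nu$ and using $f$-invariance of $\nu$ gives $\bigl|\int_\Lambda\varphi\,d\nu-\int_\Lambda\varphi\,d\mu\bigr|\le\varepsilon$, so the variational principle on $\Lambda_\varepsilon$ yields
\[
P_{\rm top}(f|\Lambda_\varepsilon,\varphi)\ge h_\nu(f)+\int_\Lambda\varphi\,d\nu>h_\mu(f)+\int_\Lambda\varphi\,d\mu-2\varepsilon.
\]
Chaining the two displays and sending $\varepsilon\to 0$ proves \eqref{eqws}.

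The residual case $h_\mu(f)=0$ reduces to the weaker inequality $\int_\Lambda\varphi\,d\mu\le\lim_\ell P_{\rm EP}(\varphi,\alpha,\ell)$, which I would handle by a Pesin-theoretic closing-lemma argument: produce expanding periodic orbits of periods $n_k\to\infty$ lying in $\EFix(f^{n_k},\alpha,\ell_k)$ whose empirical measures converge weak$\ast$ to $\mu$, and conclude $\frac{1}{n_k}S_{n_k}\varphi(x_k)\to\int_\Lambda\varphi\,d\mu$. The main technical obstacle of the argument is the input Proposition~\ref{buthm} itself, i.e.\ producing $\Lambda_\varepsilon$ with the sharp expansion rate $\chi(\Lambda_\varepsilon)>\chi(\mu)-\varepsilon$; once that is granted, the remainder is routine bookkeeping combining symbolic pressure on $\Lambda_\varepsilon$, the variational principle, and the monotonicity~\eqref{ni}.
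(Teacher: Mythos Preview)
Your proof is correct and follows essentially the same approach as the paper: both use the Katok-type horseshoe from Proposition~\ref{buthm}, embed its periodic points into $\EFix(f^n,\alpha,\ell)$, invoke Proposition~\ref{ha} to identify $P_{\rm top}(f|\Lambda_\varepsilon,\varphi)$ with the periodic-orbit sum, and lower-bound that pressure via the variational principle and~\eqref{appr}. Your version is in fact slightly cleaner---you apply Proposition~\ref{ha} directly to the mixing system $f|\Lambda_\varepsilon$ rather than passing to a power $f^m$, and you explicitly flag the zero-entropy case, which the paper's proof silently omits (harmlessly, since in the application to Theorem~\ref{Main1} the measures involved always have positive entropy).
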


\begin{proof}
Let $\mu\in \cM_{\rm E}^+$ and $0<\alpha<\chi(\mu)$. 
Using the fact that on a compact uniformly expanding set every continuous potential has at least one ergodic equilibrium state and applying Proposition~\ref{buthm} implies the existence of a
sequence  $(\mu_n)_{n\in\bN}$ of measures  in $\cM_E^+$ supported on compact
uniformly expanding sets $K_n\subset \Lambda$ such that 
\begin{equation}\label{holl}
  h_\mu(f)+\int_\Lambda\varphi \,d\mu 
\le \liminf_{n\to\infty} h_{\mu_n}(f)+\int_\Lambda \varphi\, d\mu_n=
  \liminf_{n\to\infty}P_{\rm top}(f|K_n,\varphi),
\end{equation}
$\chi(\mu_n)>\alpha$ for all $n\in\bN$ 
and $\mu_n\to\mu$ with respect to the weak$\ast$ topology.
Moreover, for each $n\in\bN$ there exist $m=m(n)\in \bN$ 
such that $f^m| K_n$ is conjugate to a full one-sided
shift. For every 
$0<\varepsilon<\chi(\mu)-\alpha$ we pick $n=n(\varepsilon)\in \bN$
such that
\begin{equation}\label{kuh}
h_\mu(f)+\int_\Lambda\varphi\, d\mu -\varepsilon
\le P_{\rm top}(f|K_n,\varphi) .
\end{equation}
Moreover, by construction of the sets $K_n$, there exists a positive integer $\ell=\ell(n)$  such
that for every periodic point $x\in K_n$ and every $k\in\bN$ we have
\begin{equation}\label{eq15}
  \frac{1}{\ell}e^{k\alpha} <
  \lVert (df^k(x))^{-1}\rVert^{-1}.
\end{equation}
Therefore,
\begin{equation}\label{kir}
  \EFix(f^k)\cap K_n \subset \EFix(f^k,\alpha,\ell)
\end{equation}
holds for all $k\in\bN$. 
Let $m\in\bN$ such that $f^m|K_n$ is topologically conjugate to the full
one-sided shift.
Since $mP_{\rm top}(f|K_n,\varphi) = P_{\rm top}(f^m|K_n,S_m\varphi)$
(see~\cite[Theorem 9.8]{Wal:81}), we may conclude that
\begin{equation}
  h_\mu(f)+\int_\Lambda\varphi\, d\mu -\varepsilon
  \le \frac{1}{m}P_{\rm top}(f^m|K_n,S_m\varphi).
\end{equation}
It now follows from Proposition \ref{ha} and an elementary calculation that
\begin{equation}\label{eqrep}
  h_\mu(f) +\int_\Lambda \varphi\, d\mu -\varepsilon
    \le \lim_{k\to\infty}\frac{1}{k}\log\left(\sum_{z\in\EFix(f^k)\cap K_n}
    \exp S_{k}\varphi(z)\right).
\end{equation}
Combining~\eqref{kir} and~\eqref{eqrep} yields
\begin{equation}
  h_\mu(f)+\int_\Lambda \varphi\, d\mu -\varepsilon
  \le \limsup_{k\to\infty}\frac{1}{k}\log\left(
  \sum_{z\in\EFix(f^k,\alpha,\ell)}\exp S_k\varphi(z)\right).
\end{equation}
Finally, since  the map $\ell\mapsto P_{\rm
  EP}(\varphi,\alpha,\ell)$ is non-decreasing (see \eqref{ni})
 the proof is complete.
\end{proof}
We are now in the situation to present the main result of this section.   

\begin{theorem}\label{Main1}
  Let $f:M\to M$ be $C^{1+\varepsilon}$ map, let $\Lambda\subset M$ be a compact locally maximal invariant set, and let $\varphi\in C^f(\Lambda,\bR)$.
Then for all $\alpha\in(0,\delta(\varphi)]$ we have
  \begin{equation}\label{GW1}
  P_{\rm top}(\varphi) =   
  \lim_{\ell\to\infty}\limsup_{n\to\infty}\frac{1}{n}\log\left(
  \sum_{x\in \EFix(f^n,\alpha,\ell)} \exp S_n\varphi(x)\right).
  \end{equation}
\end{theorem}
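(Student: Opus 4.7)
The plan is to prove the two inequalities $P_{\rm top}(\varphi) \ge \lim_{\ell\to\infty} P_{\rm EP}(\varphi,\alpha,\ell)$ and $P_{\rm top}(\varphi) \le \lim_{\ell\to\infty} P_{\rm EP}(\varphi,\alpha,\ell)$ separately. Both are essentially packaged in the auxiliary results already at hand (Propositions~\ref{ha} and~\ref{li}) together with the construction of the set $K_{\alpha,\ell}$ in \eqref{wagner}; the content of the theorem lies in recognizing that the defining property of $C^f(\Lambda,\bR)$ is exactly what allows Proposition~\ref{li} to be invoked along an approximating sequence.

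For the upper bound, I would fix $\alpha\in(0,\delta(\varphi)]$ and $\ell\in\bN$ and consider $K=K_{\alpha,\ell}$. By construction $K$ is compact and, thanks to the continuity of $x\mapsto \lVert(df^k(x))^{-1}\rVert^{-1}$, we have $K\subset \Lambda_{\alpha,\ell}$; in particular $K$ is uniformly expanding and forward invariant. The key observation is that any periodic point sitting inside a uniformly expanding set is automatically expanding, so combined with \eqref{eqsi} we get $\Fix(f^n)\cap K=\EFix(f^n,\alpha,\ell)$ for every $n\in\bN$. Proposition~\ref{ha} applied to $f|K$ then yields
\[
P_{\rm EP}(\varphi,\alpha,\ell)=\limsup_{n\to\infty}\frac{1}{n}\log\sum_{x\in\Fix(f^n)\cap K}\exp S_n\varphi(x)\le P_{\rm top}(f|K,\varphi)\le P_{\rm top}(\varphi),
\]
where the last inequality uses the obvious monotonicity of the pressure in the invariant set. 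Since $\ell$ is arbitrary, taking $\ell\to\infty$ keeps the bound.

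For the lower bound, I would invoke the hypothesis $\varphi\in C^f(\Lambda,\bR)$: choose the sequence $(\mu_n)_{n\in\bN}\subset\cM_E^+$ with $\chi(\mu_n)>\delta(\varphi)\ge \alpha$ and $h_{\mu_n}(f)+\int\varphi\,d\mu_n\to P_{\rm top}(\varphi)$. Since $\alpha<\chi(\mu_n)$ for every $n$, Proposition~\ref{li} applies to each $\mu_n$ and gives
\[
h_{\mu_n}(f)+\int_\Lambda\varphi\,d\mu_n\le \lim_{\ell\to\infty}P_{\rm EP}(\varphi,\alpha,\ell).
\]
Letting $n\to\infty$ yields $P_{\rm top}(\varphi)\le \lim_{\ell\to\infty}P_{\rm EP}(\varphi,\alpha,\ell)$, which, combined with the upper bound, finishes the proof.

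No serious obstacle is anticipated: the real work was already done in setting up Proposition~\ref{li} (which in turn absorbed the non-uniformly expanding Katok approximation of Proposition~\ref{buthm}) and in identifying $K_{\alpha,\ell}$ as a uniformly expanding subset whose periodic points coincide with $\EFix(f^n,\alpha,\ell)$. The only point requiring a bit of care is checking that $K_{\alpha,\ell}$ is indeed uniformly expanding (using a continuity argument on the expansion estimate) and that $\Fix(f^n)\cap K=\EFix(f^n,\alpha,\ell)$, i.e.\ that no non-expanding periodic points slip into $K$—both are guaranteed by $K\subset \Lambda_{\alpha,\ell}$ together with \eqref{eqsi}.
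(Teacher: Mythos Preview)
Your proposal is correct and follows essentially the same route as the paper: the upper bound via Proposition~\ref{ha} applied on the uniformly expanding set $K_{\alpha,\ell}$, and the lower bound via Proposition~\ref{li} applied to the approximating sequence furnished by the definition of $C^f(\Lambda,\bR)$. The only cosmetic difference is that the paper phrases the lower bound by fixing $\varepsilon>0$ and choosing a single $\mu$ with $h_\mu(f)+\int\varphi\,d\mu\ge P_{\rm top}(\varphi)-\varepsilon$, whereas you pass to the limit along the whole sequence $(\mu_n)$; these are equivalent.
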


\begin{proof}
Let $0<\alpha$ and $\ell\in\bN$ such that $\EFix(f^n,\alpha,\ell)\ne \emptyset$ for some $n\in\bN$. 
We first observe that
\begin{equation}\label{le}
  P_{\rm EP}(\varphi,\alpha,\ell) \le
  \sup_\nu \left(h_\nu(f)+  \int_\Lambda \varphi\, d\nu \right)
  \leq P_{\rm top}(\varphi),
\end{equation}
where the supremum is taken over all $\nu\in \cM_{\rm E}^+$ with
$\alpha\le\chi(\nu)$.  
Indeed, the right hand side inequality in~\eqref{le} is a
consequence of the variational principle while the 
left hand side inequality in~\eqref{le} can be shown analogously as in the proof of \cite[Theorem 1]{GelWol:08} by using the uniformly expanding set
$K = K_{\alpha,\ell} $ defined  in \eqref{wagner}.
 To complete the proof we shall now show that
\begin{equation}\label{pressp}
  P_{\rm top}(\varphi)\le \lim_{\ell\to\infty}P_{\rm EP}(\varphi,\alpha,\ell).
\end{equation}
Let $0<\alpha\leq\delta(\varphi)$ and let $\varepsilon>0$.
It follows from the the definition of $\delta(\varphi)$ that there exists  $\mu\in\cM_{\rm E}^+$ with $\chi(\mu)>\delta(\varphi)\geq\alpha$ such that  
\begin{equation}\label{gu}
  P_{\rm top}(\varphi)-\varepsilon
  \leq h_{\mu}(f) + \int_\Lambda \varphi\, d\mu.
\end{equation}
Moreover, by  Proposition~\ref{li} we have
\begin{equation}\label{eqwer}
  h_{\mu}(f)+\int_\Lambda\varphi\, d\mu
  \le \lim_{\ell\to \infty}P_{\rm EP}(\varphi,\alpha,\ell).
\end{equation}
Since $\varepsilon$ can be chosen arbitrary small,~\eqref{gu}
and~\eqref{eqwer} imply~\eqref{pressp}. 
\end{proof}

\begin{remark}
 {\rm 
As mentioned before, Theorem~\ref{Main1} is a version of~\cite[Theorem~1]{GelWol:08} in the case of a non-uniformly expanding (and in particular not necessarily invertible) map $f$.
}
\end{remark}

%----------------------------------------------------------------------
\section{Asymptotic distribution of periodic points}\label{sec:4}
%----------------------------------------------------------------------
In this section we  consider a  $C^{1+\varepsilon}$ map  $f:M\to M$ and a fixed  compact local maximal invariant set $\Lambda\subset M$.
Our goal is to   generalize the concept of a Bowen measure to the non-uniformly expanding case.   
Let $\varphi\in C(\Lambda, \bR)$ be a fixed potential. For  $n\in \bN$ and $\alpha>0$ we define
\begin{equation}
\EFix(f^n,\alpha)=\{x\in \Fix(f^n): \alpha \leq \lambda_1(x)\}.
\end{equation}
Hence,
\begin{equation}
\EFix(f^n,\alpha)=\bigcup_{\ell\in \bN} \EFix(f^n,\alpha,\ell).
\end{equation}
Moreover, for $E_n\subset \EFix(f^n,\alpha)$ we define
\begin{equation}
A(E_n) =
 \sum_{x\in E_n}\exp{S_n\varphi(x)}
\end{equation}
and define  measures $\sigma_n(E_n)\in \cM$ by  
\begin{equation}\label{tilsig}
\sigma_n(E_n)=
\frac{1}{A(E_n)}
\sum_{x\in E_n}\exp\left(S_n\varphi(x)\right)\delta_x.
\end{equation}
We note that by compactness of $\Lambda$ all the sets $\EFix(f^n,\alpha)$ (and therefore in particular the sets $E_n$) are finite. 
We say that a measure $\mu\in \cM^+_E$  is \emph{well-approximated by expanding periodic points}
if there  exist $\alpha< \lambda_1(\mu)$, a  subsequence $(n_k)_{k\in \bN}$, and for all $k\in \bN$ a set $E_{n_k}\subset \EFix(f^{n_k},\alpha)$ such that
\begin{equation}\label{sigmu}
\sigma_{n_k}(E_{n_k})\stackrel{\rm weak*}{\longrightarrow}
	 \ \mu\ \text{\rm as}\  k\to\infty.
\end{equation}
Moreover, we call $\mu \in\cM^+_E$ a \emph{Bowen measure} for the potential $\varphi$ (with respect to $f|\Lambda$) if 
\[
 P_{\rm top}(\varphi)=h_\mu(f)+\int_\Lambda \varphi \,d\mu
\]
and if $\mu$ is well-approximated by expanding periodic points, where the weak$\ast$ convergence~\eqref{sigmu} holds for $E_{n_\ell}= \EFix(f^{n_\ell},\alpha,\ell)$ for some subsequence $(n_\ell)_{\ell\in \bN}$.
If  $\mu$ is a Bowen measure with respect to  the potential $\varphi\equiv 0$ then we call $\mu$ a Bowen measure for the entropy.

 %--------------------------------------------------------------------
\subsection{Well--approximated and Bowen measures}
%---------------------------------------------------------------------

We now consider a general $C^{1+\varepsilon}$ map  $f:M\to M$ and a fixed compact $f$-invariant local maximal set $\Lambda\subset M$. We start by showing that hyperbolic measures are well-approximated by expanding periodic orbits.

\begin{theorem}\label{thwell}
 Let $f:M\to M$ be a $C^{1+\varepsilon}$ map, let $\Lambda\subset M$ be a locally maximal invariant set and let  $\mu\in\cM^+_E$ with $h_\mu(f)>0$. Then  $\mu$ is well-approximated by expanding periodic points.
\end{theorem}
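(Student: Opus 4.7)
The plan is to exploit the following elementary identity: when $E_n$ consists of a single full periodic orbit $O(x)=\{x,f(x),\ldots,f^{n-1}(x)\}$ for $x\in\Fix(f^n)$, the cyclic invariance $S_n\varphi(f^i(x))=S_n\varphi(x)$ forces the exponential weights in~\eqref{tilsig} to coincide, and one gets
\[
\sigma_n(O(x))=\frac{1}{n}\sum_{i=0}^{n-1}\delta_{f^i(x)},
\]
independently of $\varphi$. Thus the task reduces to producing a sequence of periodic points $x_k\in\Fix(f^{n_k})$, with Lyapunov exponents uniformly bounded below by some fixed $\alpha<\lambda_1(\mu)$, whose orbital empirical measures converge weak$\ast$ to $\mu$.

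For this I would apply Proposition~\ref{buthm} diagonally. Fix $\alpha\in(0,\lambda_1(\mu))$, a countable family $\{\psi_m\}_{m\in\bN}\subset C(\Lambda,\bR)$ that is dense in the sup norm, and $\varepsilon_k\downarrow 0$ with $\varepsilon_k<\lambda_1(\mu)-\alpha$. For each $k$, Proposition~\ref{buthm}, applied to $\mu$ with the finite collection $\psi_1,\ldots,\psi_k$ and parameter $\varepsilon_k$, produces a compact uniformly expanding set $\Lambda_k\subset\Lambda$ on which $f$ is topologically conjugate to a mixing subshift of finite type, with $\chi(\Lambda_k)>\lambda_1(\mu)-\varepsilon_k>\alpha$, $h_{\rm top}(f|\Lambda_k)>h_\mu(f)-\varepsilon_k>0$, and
\[
\lim_{n\to\infty}\left|\frac{1}{n}S_n\psi_i(y)-\int_\Lambda\psi_i\,d\mu\right|<\varepsilon_k
\]
for every $y\in\Lambda_k$ and every $i\le k$. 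Because $\chi(\Lambda_k)>\alpha$, there exists $\ell_k\in\bN$ with $\Lambda_k\subset\Lambda_{\alpha,\ell_k}$, so $\Fix(f^n)\cap\Lambda_k\subset\EFix(f^n,\alpha,\ell_k)\subset\EFix(f^n,\alpha)$ for every $n$.

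Since $f|\Lambda_k$ is conjugate to a mixing SFT of positive entropy, periodic orbits of every sufficiently large period exist in $\Lambda_k$. I would pick a strictly increasing $n_k\to\infty$ and any $x_k\in\Fix(f^{n_k})\cap\Lambda_k$, and set $E_{n_k}:=O(x_k)$. The opening identity combined with Proposition~\ref{buthm}(c), applied to the periodic point $x_k$ (whose limiting Birkhoff average equals $n_k^{-1}S_{n_k}\psi_i(x_k)$), gives
\[
\left|\int\psi_i\,d\sigma_{n_k}(E_{n_k})-\int\psi_i\,d\mu\right|<\varepsilon_k\quad\text{for every }i\le k,
\]
and density of $\{\psi_m\}$ together with the fact that the $\sigma_{n_k}(E_{n_k})$ are probability measures upgrades this to weak$\ast$ convergence $\sigma_{n_k}(E_{n_k})\to\mu$.

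The entire substantive content is absorbed in Proposition~\ref{buthm}; once it is available, the rest is the cyclicity identity plus a diagonal selection. The naive alternative of taking $E_n=\Fix(f^n)\cap\Lambda_k$ would be more delicate, since the weights $\exp(S_n\varphi)$ across distinct orbits can differ by factors of order $e^{n\varepsilon_k}$ and thus bias $\sigma_n(E_n)$ away from any equidistributed-orbit measure on $\Lambda_k$; restricting to a single orbit sidesteps this issue entirely.
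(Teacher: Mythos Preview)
Your argument is correct and rests on the same engine as the paper's proof, namely Proposition~\ref{buthm} applied diagonally with a dense family of test functions, but the two proofs diverge in the choice of the sets $E_{n_k}$. The paper takes $E_{n_k}=\Fix(f^{n_k})\cap\Lambda_k$, passes through the unique equilibrium state $\nu_k$ on $\Lambda_k$, uses that $\nu_k$ is a Bowen measure on the mixing SFT $\Lambda_k$ to get $d(\sigma_{n_k}(E_{n_k}),\nu_k)$ small, and combines this with $d(\nu_k,\mu)$ small via Birkhoff's theorem and property~(c). You instead take $E_{n_k}$ to be a \emph{single} periodic orbit $O(x_k)\subset\Lambda_k$ and exploit the cyclic invariance $S_{n_k}\varphi(f^i(x_k))=S_{n_k}\varphi(x_k)$ to collapse $\sigma_{n_k}(E_{n_k})$ to the orbital empirical measure, independent of $\varphi$; closeness to $\mu$ then follows directly from~(c). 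Your route is more elementary in that it bypasses the classical Bowen-measure theorem for mixing SFTs entirely, and it handles the dependence on the ambient potential $\varphi$ more transparently---an issue the paper's proof treats somewhat implicitly. The paper's choice, on the other hand, produces sets $E_{n_k}$ that are richer (many orbits rather than one) and of the specific form $\EFix(f^{n_k},\alpha,\ell)$ intersected with a horseshoe, which aligns more closely with the Bowen-measure definition used later in Theorem~\ref{thmut}.
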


\begin{proof}
 Let $(\varphi_k)_{k\in \bN}$ be a dense subset of the separable Banach space $C(\Lambda,\bR)$. For $\nu$, $\eta\in \cM$ we define
\[
 d(\nu,\eta)=\sum_{i=1}^\infty \frac{\left\lvert\int \varphi_i\,d\nu - \int \varphi_i\,d\eta\right\rvert}{2^i\lVert \varphi_i\rVert}.
\]
It is well-known that $d$ is a metric which induces the weak$\ast$ topology on $\cM$. Let $0<\alpha<\lambda_1(\mu)$.  Given $k\in \bN$ we chose $m\in \bN$ with
$2\sum_{i=m+1}^{\infty}2^{-i}< (1/3) 2^{-k}$ and $\varepsilon_k>0$ small enough such that 
\begin{equation}
 \sum_{i=1}^m \frac{\varepsilon_k}{2^i\lVert \varphi_i\rVert}<\frac{1}{3} 2^{-k}.
\end{equation}
Making $\varepsilon_k$ smaller if necessary we can assure that $\alpha<\lambda_1(\mu)-\varepsilon_k$ and $\varepsilon_k<2^{-k}$.
Let $\Lambda_k$ be the uniformly expanding set obtained by applying Proposition \ref{buthm} to $\mu$, $\varepsilon=\varepsilon_k$ and to the functions $\varphi_1$, $\ldots$, $\varphi_m$. In particular, $f|{\Lambda_k}$ is topologically conjugate to a mixing subshift of finite type, $\chi(\Lambda_k)>\alpha$, and
$h_{\rm top}(f|{\Lambda_k})>h_\mu(f)-\varepsilon_k$. Let $\nu_k$ denote the unique measure of maximal entropy of
$f|{\Lambda_k}$.  Applying the Birkhoff ergodic theorem to~\eqref{appr} and using the definition of $m$ and $\varepsilon_k$ we conclude that $d(\nu_k,\mu)<\frac{2}{3}2^{-k}$.  It is well-known that $\nu_k$ is a Bowen measure for $f|{\Lambda_k}$.
Thus, we can chose $n_k\in \bN$ such that 
\[
d(\sigma_{n_k}(\Fix(f^{n_k})\cap \Lambda_k),\nu_k)<\frac{1}{3} 2^{-k}.
\] 
Defining $E_{n_k}=\Fix(f^{n_k})\cap \Lambda_k$ we obtain that $d(\sigma_{n_k}(E_{n_k}),\mu)<2^{-k}$. Moreover, by
defining the $n_k$ successively we can assure that $n_k<n_{k+1}$ for all $k\in\bN$. This completes the proof.
\end{proof}

We will need the following result, which is typically shown by using the Misiurewicz argument when proving the variational principle (see, for example,~\cite[Section 2.4]{UrbPrz:}).

\begin{lemma}\label{lemKH}
 Let $f:M\to M$ be a $C^{1+\varepsilon}$ map, let $\Lambda\subset M$ be a locally maximal invariant set, and let $\varphi\in C(\Lambda, \bR)$.
 For $n\in \bN$ let $E_n$ be a sequence of maximal $(n,\varepsilon)$-separated sets in $\Lambda$, and define the measures
\[
\nu_n=\sigma_n(E_n),
\quad
\mu_n=\frac{1}{n}\sum_{i=1}^{n-1}f^i_*\nu_n.
\] 
Then there exists a weak$\ast$ accumulation point $\mu$ of the measures $(\mu_n)_{n\in \bN}$ such that
\begin{equation}\label{lemmis}
 \limsup_{n\to \infty}\frac{1}{n}\log\sum_{x\in E_n}\exp S_n\varphi(x) 
\leq h_\mu(f) +\int_\Lambda\varphi\,d\mu.
\end{equation}
\end{lemma}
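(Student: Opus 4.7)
I would prove Lemma \ref{lemKH} by following the classical Misiurewicz argument used in the proof of the variational principle; the novelty is only in bookkeeping, since $\nu_n$ is supported on a finite set.

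\textbf{Setting up the accumulation point.} First I pass to a subsequence $(n_k)$ along which the $\limsup$ on the left of \eqref{lemmis} is attained as a genuine limit. By compactness of $\cM(\Lambda)$ under weak$\ast$ convergence I may further thin out so that $\mu_{n_k} \to \mu$ for some Borel probability measure $\mu$. The usual Krylov--Bogolyubov telescoping $\lVert f_*\mu_n - \mu_n\rVert \le 2/n$ shows $\mu \in \cM$. Since only finitely many functions $\varphi_1,\dots,\varphi_m$ enter the argument and $\mu$ is a finite measure, I can choose a finite measurable partition $\xi$ of $\Lambda$ with $\diam \xi < \varepsilon$ and $\mu(\partial\xi)=0$; this is standard since every point has arbitrarily small neighborhoods whose boundary has zero $\mu$-measure.

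\textbf{Recovering $\log A(E_n)$ as an entropy plus an integral.} Because $E_n$ is $(n,\varepsilon)$-separated and $\diam \xi < \varepsilon$, each atom of $\xi^{(n)}=\bigvee_{i=0}^{n-1}f^{-i}\xi$ contains at most one point of $E_n$. Therefore the partition $\xi^{(n)}$ separates the atoms of $\nu_n$, and a direct computation using $\nu_n(\{x\})=e^{S_n\varphi(x)}/A(E_n)$ gives
\begin{equation*}
H_{\nu_n}(\xi^{(n)}) = \log A(E_n) - \int S_n\varphi \, d\nu_n = \log A(E_n) - n\int\varphi\, d\mu_n,
\end{equation*}
where the last equality uses $\int S_n\varphi\, d\nu_n = n\int\varphi\, d\mu_n$, which is immediate from the definition of $\mu_n$.

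\textbf{The Misiurewicz inequality.} For any fixed $q\in\bN$, writing $n = aq + r$ with $0\le r<q$ and using subadditivity of $H_{\nu_n}$ under refinements together with concavity of $-x\log x$, one has
\begin{equation*}
\frac{1}{n}H_{\nu_n}(\xi^{(n)}) \le \frac{1}{q}H_{\mu_n}(\xi^{(q)}) + \frac{2q\log\card(\xi)}{n}.
\end{equation*}
Because $\mu(\partial\xi^{(q)})=0$, the weak$\ast$ convergence $\mu_{n_k}\to\mu$ gives $H_{\mu_{n_k}}(\xi^{(q)}) \to H_\mu(\xi^{(q)})$. Letting $k\to\infty$ and then $q\to\infty$ yields
\begin{equation*}
\limsup_{k\to\infty}\frac{1}{n_k}H_{\nu_{n_k}}(\xi^{(n_k)}) \le h_\mu(f,\xi) \le h_\mu(f).
\end{equation*}
Combined with $\int\varphi\, d\mu_{n_k}\to\int\varphi\, d\mu$ and the identity from the previous step, this gives \eqref{lemmis}.

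\textbf{Main obstacle.} The only delicate point is the selection of the partition $\xi$: one needs simultaneously $\diam \xi<\varepsilon$, $\mu(\partial\xi)=0$, and (to be fully rigorous) $\mu(\partial\xi^{(q)})=0$ for all $q$, so that the weak$\ast$ convergence transfers to the entropies on the finer partitions. This is resolved by standard regularity of $\mu$: for each point one can choose a ball of any radius below $\varepsilon/2$ whose boundary has zero $\mu$-measure, and then a finite covering argument yields a partition with all $f^{-i}$-preimages of boundaries of measure zero, because only countably many radii are excluded.
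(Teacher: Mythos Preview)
Your proof is correct and is precisely the Misiurewicz argument that the paper invokes; the paper does not give its own proof of this lemma but merely cites it as a standard consequence of the variational principle argument (see the sentence preceding the lemma and the reference to~\cite[Section~2.4]{UrbPrz:}). Two small remarks: the phrase ``since only finitely many functions $\varphi_1,\dots,\varphi_m$ enter the argument'' is a stray leftover from another context and should be deleted; and your ``main obstacle'' is lighter than you make it, since once $\mu(\partial\xi)=0$ and $\mu$ is $f$-invariant, continuity of $f$ gives $\partial(f^{-i}A)\subset f^{-i}(\partial A)$ and hence $\mu(\partial\xi^{(q)})=0$ automatically for all $q$.
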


The next result shows the Bowen measure property for a large class of non-uniformly  expanding  equilibrium states. 

\begin{theorem}\label{thmut}
 Let $f:M\to M$ be a $C^{1+\varepsilon}$ map and let $\Lambda\subset M$ be a locally maximal invariant set such that the entropy map $\nu\mapsto h_\nu(f)$ is upper semi-continuous on $\cM$. 
Assume that  $\mu\in\cM^+_E$ is the unique equilibrium state of a potential $\varphi\in C(\Lambda,\bR)$  and that $h_\mu(f)>0$. Then  $\mu$ is a Bowen measure with respect to $\varphi$.
\end{theorem}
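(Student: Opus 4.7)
The plan is to combine Theorem~\ref{Main1} with a Misiurewicz-style upper bound and then exploit uniqueness of the equilibrium state to pin down the weak$\ast$ limit of the measures $\sigma_n(\EFix(f^n,\alpha,\ell))$. A key enabling observation is that each such measure is already $f$-invariant, since $f$ permutes $\EFix(f^n,\alpha,\ell)$ (being the intersection of the $f$-invariant sets $\EFix(f^n)$ and $K_{\alpha,\ell}$) and $S_n\varphi$ is constant along each orbit of period~$n$, so the weights $\exp S_n\varphi(x)$ are preserved under this permutation. Thus no time-averaging is needed and the required weak$\ast$ convergence $\sigma_{n_\ell}(\EFix(f^{n_\ell},\alpha,\ell))\to\mu$ can be analyzed directly.

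First I would verify $\varphi\in C^f(\Lambda,\bR)$: positivity of $h_\mu(f)$ rules out any zero-entropy equilibrium state, giving $\alpha(\varphi)>0$, and condition~(b) holds with the constant sequence $\mu_n\equiv\mu$ and any $\delta(\varphi)\in(0,\chi(\mu))$. Fix $\alpha\in(0,\delta(\varphi)]$; Theorem~\ref{Main1} then yields $P_{\rm top}(\varphi)=\lim_{\ell\to\infty}P_{\rm EP}(\varphi,\alpha,\ell)$. Since $f$ is expansive on the compact uniformly expanding set $K_{\alpha,\ell}$ from~\eqref{wagner}, the set $\EFix(f^n,\alpha,\ell)$ is $(n,\varepsilon_\ell)$-separated for some $\varepsilon_\ell>0$ depending on $\alpha,\ell$ but not on $n$. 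For each fixed $\ell$ I would apply a Misiurewicz-type argument (in the spirit of Lemma~\ref{lemKH}) to produce a weak$\ast$ accumulation point $\nu_\ell\in\cM$ of $(\sigma_n(\EFix(f^n,\alpha,\ell)))_n$ with
\[
P_{\rm EP}(\varphi,\alpha,\ell)\le h_{\nu_\ell}(f)+\int_\Lambda\varphi\,d\nu_\ell.
\]
Together with the variational principle and Theorem~\ref{Main1}, this sandwich forces $h_{\nu_\ell}(f)+\int\varphi\,d\nu_\ell\to P_{\rm top}(\varphi)$ as $\ell\to\infty$. Extracting a weak$\ast$ convergent subsequence $\nu_{\ell_j}\to\tilde\mu$ and invoking the hypothesized upper semi-continuity of the entropy map, I obtain $P_{\rm top}(\varphi)\le h_{\tilde\mu}(f)+\int\varphi\,d\tilde\mu$, so $\tilde\mu$ is an equilibrium state; uniqueness forces $\tilde\mu=\mu$, hence $\nu_\ell\to\mu$ weak$\ast$. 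A diagonal choice of $n_\ell$ with $\sigma_{n_\ell}(\EFix(f^{n_\ell},\alpha,\ell))$ within $1/\ell$ of $\nu_\ell$ in any metric for the weak$\ast$ topology then completes the argument.

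The main obstacle is that Lemma~\ref{lemKH} is stated for \emph{maximal} $(n,\varepsilon)$-separated subsets of $\Lambda$, whereas $\EFix(f^n,\alpha,\ell)$ is merely separated within $K_{\alpha,\ell}$ with an $\ell$-dependent constant~$\varepsilon_\ell$. The standard Misiurewicz proof, however, uses separation only to force distinct points into distinct atoms of a small-diameter partition and produces only the upper bound we need; maximality is used solely for the matching lower bound. Thus for each fixed $\ell$ one may use a partition of $\Lambda$ of diameter less than $\varepsilon_\ell$ to derive the displayed inequality, with the $\ell$-dependence absorbed in the subsequent upper semi-continuity step.
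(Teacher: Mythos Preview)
Your proposal is correct and follows essentially the same route as the paper's proof: verify $\varphi\in C^f(\Lambda,\bR)$, invoke Theorem~\ref{Main1}, use expansivity on $K_{\alpha,\ell}$ to get $(n,\varepsilon_\ell)$-separation of $\EFix(f^n,\alpha,\ell)$, apply the Misiurewicz upper bound (Lemma~\ref{lemKH}) to obtain accumulation points $\nu_\ell$ with $P_{\rm EP}(\varphi,\alpha,\ell)\le h_{\nu_\ell}(f)+\int\varphi\,d\nu_\ell$, then use upper semi-continuity and uniqueness to force $\nu_\ell\to\mu$, and finish with a diagonal choice of $n_\ell$. Your explicit observation that $\sigma_n(\EFix(f^n,\alpha,\ell))$ is already $f$-invariant (so the time-averaging in Lemma~\ref{lemKH} is vacuous) and your discussion of why maximality of the separated sets is not needed for the upper bound are both correct and in fact clarify points the paper's proof passes over silently.
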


\begin{proof}
We first notice that since $\mu$ is the unique equilibrium state of the potential $\varphi$, $\mu\in \cM_E^+$, and $h_\mu(f)>0$, it follows that $\varphi\in C^f(\Lambda,\bR)$.  Let $\delta(\varphi)$ be as in the definition of $C^f(\Lambda,\bR)$ and 
let $\alpha\in (0,\delta(\varphi)]$.  For $\ell\in \bN$ we define
\begin{equation}\label{eqzxc}
 p_\ell=\limsup_{n\to\infty}\frac{1}{n}\log\left(
  \sum_{x\in \EFix(f^n,\alpha,\ell)} \exp S_n\varphi(x)\right).
\end{equation}
Therefore,~\eqref{GW1} implies that
\begin{equation}\label{eqert}
 h_\mu(f)+\int_\Lambda \varphi d\mu =P_{\rm top}(\varphi)=\lim_{\ell\to\infty}p_\ell.
\end{equation}
Given  $\ell\in\bN$  let 
$K = K_{\alpha,\ell} $ be the uniformly expanding set defined  in~\eqref{wagner}. 
Therefore, for all $n\in \bN$  with $\EFix(f^n,\alpha,\ell)\ne\emptyset$ we have $\EFix(f^n)\cap K = \EFix(f^n,\alpha,\ell)$, see~\eqref{eqsi}.
It follows that $f|K$ is expansive with some expansivity constant $\varepsilon=\varepsilon_\ell>0$.
Note that $E_n\eqdef \EFix(f^n,\alpha,\ell)$ is an $(n,\epsilon)$ separated set.
Since $\cM$ is compact (also using~\eqref{eqzxc} and~\eqref{eqert}) there exist $\mu_\ell\in \cM$ and a subsequence
$(n_k)_{k\in\bN}$ such that $\lim_{k\to \infty} \sigma_{n_k}(E_{n_k})=\mu_\ell$ and 
\begin{equation}
 p_\ell=\lim_{k\to\infty}\frac{1}{n_k}\log\left(
  \sum_{x\in E_{n_k}} \exp S_{n_k}\varphi(x)\right).
\end{equation}
Hence, by~\eqref{lemmis},
\begin{equation}\label{eq123}
 p_\ell\leq h_{\mu_\ell}(f)+\int_\Lambda \varphi \,d\mu_{\ell}.
\end{equation}
We now pick $n_\ell\in \{n_k: k\in \bN\}$ such that 
\begin{equation}\label{ende}
 d(\sigma_{n_\ell}(E_{n_\ell}),\mu_\ell)<2^{-\ell}.
\end{equation}
Moreover, by
defining  $n_\ell$ successively we can assure that $n_\ell<n_{\ell+1}$ for all $\ell\in\bN$.
It follows from~\eqref{eqert},~\eqref{eq123}, and the variational principle that
\begin{equation}
 \lim_{\ell\to \infty} h_{\mu_\ell}(f)+\int_\Lambda\varphi \,d\mu_\ell = P_{\rm top}(\varphi).
\end{equation}
Let $\nu\in \cM$ be any weak$\ast$ accumulation point of the sequence of measures  $(\mu_\ell)_{\ell\in \bN}$. Then, 
the upper semi-continuity of the entropy map implies $h_{\nu}(f)+\int\varphi\, d\nu = P_{\rm top}(\varphi)$.
Using that $\mu$ is the unique equilibrium state of $\varphi$ we may conclude that $\nu=\mu$. In particular,
$\mu_\ell\to \mu$ as $\ell\to \infty$. Finally, \eqref{ende} yields $\lim_{\ell\to\infty}\sigma_{n_\ell}(E_{n_\ell})=\mu$, which completes the proof.
\end{proof}

\begin{remark}\label{rem:3}
{\rm 
 We note that in the proof of Theorem~\ref{thmut} we do not really need upper semi-continuity of the entropy map at every $\mu\in \cM$. Indeed, one can easily verify that Theorem~\ref{thmut} remains true if the entropy map  is upper semi-continuous at all measures $\nu\in \cM$ which can be approximated by a sequence of  measures $(\nu_n)_{n\in \bN}$ such that $h_{\nu_n}(f)+\int_\Lambda \varphi \,d\nu_n$ converges to $P_{\rm top}(\varphi)$. We will explore this further in the next section.
}\end{remark}                   

%-----------------------------------------------------------------------------------------
\subsection{Expansive maps}
%-----------------------------------------------------------------------------------------

In addition to the assumptions in Theorem~\ref{thmut} we now assume that $f$ is expansive on  $\Lambda$.
In this case our methods even provide a stronger approximation property by expanding periodic points.

\begin{theorem}\label{thmut2}
 Let $f:M\to M$ be a $C^{1+\varepsilon}$ map and let $\Lambda\subset M$ be a locally maximal invariant set such that $f|\Lambda$ is expansive. 
Assume that  $\mu\in\cM^+_E$ is the unique equilibrium state of a potential $\varphi\in C(\Lambda,\bR)$ and that $h_\mu(f)>0$.
 Then  for all $0<\alpha\leq\delta(\varphi)$ we have 
\begin{equation}\label{eqbowga}
 \lim_{n\to\infty}\sigma_{n}(\EFix(f^{n},\alpha))=\mu.
\end{equation}
\end{theorem}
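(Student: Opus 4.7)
The strategy is to show that every weak-$\ast$ accumulation point of $(\sigma_n(\EFix(f^n,\alpha)))_{n\in\bN}$ is the unique equilibrium state $\mu$; by compactness of $\cM$ this forces convergence of the full sequence. Expansiveness of $f|\Lambda$ with constant $\varepsilon_0>0$ provides the uniform scale necessary to replace the $\ell$-dependent horseshoes of Theorem~\ref{thmut} by a single Misiurewicz-type estimate applicable along every subsequence. Two structural facts simplify the set-up: since $\lambda_1(\cdot)$ and $S_n\varphi$ are constant along $f$-orbits of period dividing $n$, the set $E_n\eqdef\EFix(f^n,\alpha)$ is a union of full periodic orbits and each $\sigma_n(E_n)$ is already $f$-invariant (coinciding with its own Cesaro average); and any two distinct points of $\Fix(f^n)$ are $(n,\varepsilon_0)$-separated by expansiveness, so $E_n$ is $(n,\varepsilon_0)$-separated.

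The central step is to prove that $a_n\eqdef\frac{1}{n}\log A(E_n)$ satisfies $\lim_n a_n = P_{\rm top}(\varphi)$. The upper bound $\limsup_n a_n \leq P_{\rm top}(\varphi)$ follows by extending $E_n$ to a maximal $(n,\varepsilon_0)$-separated subset of $\Lambda$ and using that, for an expansive map, the scale-$\varepsilon_0$ pressure agrees with $P_{\rm top}(\varphi)$. For the matching lower bound I would apply Proposition~\ref{buthm} to $\mu$ with test function $\varphi$ and tolerances $\varepsilon_k\downarrow 0$, chosen small enough that $\alpha<\lambda_1(\mu)-\varepsilon_k$ (possible since $\alpha\leq\delta(\varphi)<\lambda_1(\mu)$). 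This produces compact uniformly expanding sets $K_k\subset\Lambda$ with $\chi(K_k)>\alpha$ on which $f$ is conjugate to a mixing subshift of finite type. Since $\chi(K_k)>\alpha$, every point of $\Fix(f^n)\cap K_k$ lies in $\EFix(f^n,\alpha)$, and Proposition~\ref{ha} in the mixing case gives
\[
\lim_{n\to\infty}\frac{1}{n}\log\sum_{x\in\Fix(f^n)\cap K_k}\exp S_n\varphi(x) = P_{\rm top}(f|K_k,\varphi).
\]
Using the measure of maximal entropy on $K_k$, Proposition~\ref{buthm}(b) to bound its entropy below by $h_\mu(f)-\varepsilon_k$, and Proposition~\ref{buthm}(c) together with the Birkhoff ergodic theorem to place its $\varphi$-integral within $\varepsilon_k$ of $\int\varphi\,d\mu$, one obtains $P_{\rm top}(f|K_k,\varphi)\geq P_{\rm top}(\varphi)-2\varepsilon_k$; letting $k\to\infty$ yields $\liminf_n a_n\geq P_{\rm top}(\varphi)$.

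With $\lim_n a_n = P_{\rm top}(\varphi)$ in hand, pick any subsequence along which $\sigma_{n_k}(E_{n_k})\to\nu$. The Misiurewicz-type argument underlying Lemma~\ref{lemKH}, executed with a partition $\mathcal P$ of diameter $<\varepsilon_0$ and $\nu(\partial\mathcal P)=0$ and exploiting that such a partition is a generator for $f|\Lambda$ (by expansiveness) so $h_\nu(f,\mathcal P)=h_\nu(f)$, applies along this subsequence and delivers
\[
P_{\rm top}(\varphi) = \lim_{k\to\infty}\tfrac{1}{n_k}\log A(E_{n_k}) \leq h_\nu(f) + \int_\Lambda\varphi\,d\nu \leq P_{\rm top}(\varphi).
\]
Hence $\nu$ is an equilibrium state for $\varphi$ and, by uniqueness, $\nu=\mu$. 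Since every accumulation point equals $\mu$, compactness of $\cM$ gives $\sigma_n(\EFix(f^n,\alpha))\to\mu$, which proves~\eqref{eqbowga}.

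The main obstacle I expect is the lower bound $\liminf_n a_n\geq P_{\rm top}(\varphi)$: it requires constructing mixing horseshoes $K_k\subset\Lambda_{\alpha,\ell_k}$ whose pressures $P_{\rm top}(f|K_k,\varphi)$ approximate $P_{\rm top}(\varphi)$, for which the full strength of Proposition~\ref{buthm} (simultaneous approximation of entropy, Lyapunov exponent, and Birkhoff average) is essential. Once this approximation is secured, expansiveness takes care of both the uniform separation scale and the generator condition, and this is precisely what upgrades the subsequential convergence of Theorem~\ref{thmut} to the full-sequence convergence in~\eqref{eqbowga}.
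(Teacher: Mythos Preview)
Your proof is correct and follows the same strategy as the paper's: use expansiveness to obtain a uniform $(n,\varepsilon_0)$-separation scale for $E_n=\EFix(f^n,\alpha)$, show that every weak$\ast$ accumulation point of $(\sigma_n(E_n))_n$ is an equilibrium state, and conclude by uniqueness. The paper's proof is very terse—it simply invokes upper semi-continuity of the entropy map and says the argument proceeds ``along the lines of the proof of Theorem~\ref{thmut}''—whereas you spell out the crucial lower bound $\liminf_n \tfrac{1}{n}\log A(E_n)\ge P_{\rm top}(\varphi)$ directly via Proposition~\ref{buthm}, which is exactly the content hidden behind the paper's reference to Theorem~\ref{Main1}.

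Two minor remarks. First, your invocation of the generator property (``so $h_\nu(f,\mathcal P)=h_\nu(f)$'') is not actually needed: the Misiurewicz estimate already yields $\limsup_k \tfrac{1}{n_k}\log A(E_{n_k})\le h_\nu(f,\mathcal P)+\int\varphi\,d\nu\le h_\nu(f)+\int\varphi\,d\nu$, and the last inequality holds for any partition. Second, Lemma~\ref{lemKH} as stated only asserts the existence of \emph{some} accumulation point satisfying~\eqref{lemmis}; you are (correctly) using the standard stronger fact that the inequality holds for \emph{every} accumulation point along a subsequence on which $\mu(\partial\mathcal P)=0$. Since you explicitly run the argument with such a partition, this is fine, but it is worth noting that you are going slightly beyond the lemma's literal statement. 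Your observation that $\sigma_n(E_n)$ is already $f$-invariant (so the Ces\`aro averaging in Lemma~\ref{lemKH} is vacuous) is a nice simplification that the paper does not make explicit.
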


\begin{proof}
We use the fact that expansivity implies that the entropy map is upper semi-continuous (see~\cite{Wal:81}).
Therefore, the proof  can be done along the lines of the proof of Theorem~\ref{thmut}. There are only two differences.
First we do not have to consider a fixed $\ell$ when looking at periodic points in $\EFix(f^{n},\alpha)$
since $\EFix(f^{n},\alpha)$ is already a $(n,\varepsilon)$-separated set if $\varepsilon$ is an expansivity constant for $f|\Lambda$. Second, if $\nu\in \cM$ is any accumulation point of the sequence of measures $(\sigma_{n}(\EFix(f^{n},\alpha)))_{n\in \bN}$, then by a similar argument as in the proof of Theorem \ref{thmut} we obtain $h_\nu(f)+\int_\Lambda \varphi \,d\nu= P_{\rm top}(\varphi)$, which implies $\nu=\mu$, and we can conclude that   $\sigma_{n}(\EFix(f^{n},\alpha))$ converges to the measure $\mu$ as $n\to\infty$.
\end{proof}

\begin{example}{\rm 
	Let $f:\oc\to\oc$ be a rational map on the Riemann sphere with degree $d\geq 2$. Let $J\subset \oc$ denote the Julia set of $f$. We refer to the overview article~\cite{U} for more details concerning the ergodic theory of $f|J$. Suppose $f$ is parabolic, that is, the Julia set $J$ does not contain critical points but contains at least one parabolic point. In this case $f|J$ is expansive but not uniformly hyperbolic.  It follows that with $\varphi_t(z)= -t\log\,\lvert f'(z)\rvert$ and $t\in[0,\dim_{\rm H} J)$ the assumptions of Theorem \ref{thmut2} are satisfied for the potential $\varphi_t$. Thus~\eqref{eqbowga} holds for the corresponding equilibrium state $\mu_t$. Another interesting class of rational maps are the so-called  topological Collet-Eckmann maps~\cite{PRS1}. For these maps every  equilibrium
state is hyperbolic (in fact even every invariant measure is hyperbolic) and we obtain again a large class of equilibrium states which are Bowen measures.}
\end{example}

\begin{remark}{\rm
 The proofs of Theorems \ref{thmut} and \ref{thmut2} crucially rely on 
     the fact that the topological pressure can be computed by evaluating the potential on the hyperbolic periodic orbits (see Theorem \ref{Main1}). We recently proved an analogous statement in the case of  non-uniformly hyperbolic   diffeomorphisms and a rather general class of potentials  (see~\cite{GelWol:08} for details).
Therefore, by using similar methods as in this paper, we are able to prove versions of Theorems \ref{thmut} and \ref{thmut2} in the case of non-uniformly hyperbolic diffeomorphisms.\\
     Bedford et al. showed in \cite{BedLyuSmi:93}  that the unique measure of maximal entropy of a complex H\'enon map is a Bowen measure. This is a version of our results, in particular, since in this case the map is smooth and hence the entropy map is upper semi-continuous~\cite{New:89}.
}\end{remark}

%-------------------------------------------------------------------------
\subsection{A class of non-uniformly expanding maps}
%-------------------------------------------------------------------------

Following Remark~\ref{rem:3}, we now study a particular class of maps which have sufficiently strong expansion properties, though they are not necessarily expansive nor uniformly expanding. In particular, this class contains certain non-uniformly expanding maps which were studied in~\cite{Oli:03}.

We first collect some preliminary results.
We say a measure $\mu\in\cM_{\rm E}$ is \emph{$f$-expanding} with
exponent $\beta>0$, if for $\mu$-almost every $x$ we have
	\[
	\liminf_{n\to\infty}\frac{1}{n}
	\sum_{k=0}^{n-1}\log \,\lVert
          (df(f^k(x)))^{-1}\rVert^{-1} 
        \ge \beta.
	\]
We denote by $\cN(f,\beta)$ the set of all $f$-expanding measures with exponent $\beta$.
It is not hard to see that  $\mu\in\cM^+_{\rm E}$ (i.e. $\chi(\mu)>0$) if and only if for any $0<\beta<\chi(\mu)$ there exists $n\in\bN$ such that $\mu$ is $f^n$-expanding with exponent $\beta$. We note that any other  measure $\nu\in\cM^+_{\rm E}$ satisfying $\chi(\nu)>\beta$ is also $f^k$-expanding with exponent $\beta$ for some  $k\in\bN$, but in general $n$ and $k$ may be different.
Using this property, we will now show that   $f$ is expansive on a rather large  set of points with an expansivity constant depending only on $f^n$ and $\beta$. 

\begin{lemma}\label{lem:2}
	Assume that $f$ is a $C^1$  local diffeomorphism in a neighborhood of $\Lambda$.
	Let $n\in\bN$ and $\beta>0$. Then there exists $\delta=\delta(n,\beta)>0$ such that for any $\mu\in\cN(f^n,\beta)$ and any $0<\varepsilon<\delta$  we have that
	\[
	\left \{y\in M \colon d(f^k(x),f^k(y))\le \varepsilon\text{ for all }k\in \bN_0
	\right\} =\{x\}
	\]
for $\mu$-almost every $x\in\Lambda$.
\end{lemma}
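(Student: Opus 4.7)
The strategy is to convert the average $f^n$-expansion from the hypothesis $\mu\in\cN(f^n,\beta)$ into genuine exponential separation along the $f^n$-orbit. Suppose for contradiction that there is $y\ne x$ with $d(f^k(x), f^k(y))\le \varepsilon$ for every $k\in\bN_0$; in particular $d(f^{nk}(x), f^{nk}(y))\le\varepsilon$ for all $k$. The plan is to show that this forces $d(f^{nN}(x), f^{nN}(y))\to\infty$ for $\mu$-a.e.\ $x$, provided $\varepsilon$ is smaller than a constant $\delta=\delta(n,\beta)$.

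First I would establish a uniform one-step $C^1$-estimate. Since $f$ is a $C^1$ local diffeomorphism on a neighborhood of the compact set $\Lambda$, so is $f^n$, and the function $z\mapsto m(df^n(z))$ with $m(A):=\lVert A^{-1}\rVert^{-1}$ is continuous and bounded away from $0$ on a compact neighborhood of $\Lambda$; in particular $\lvert\log m(df^n(z))\rvert\le L$ for some $L=L(n)$. By uniform continuity of $df^n$ and a standard mean-value estimate in exponential charts, for each $\eta>0$ there exists $\delta_0=\delta_0(n,\eta)>0$ such that $z\in\Lambda$ and $d(z,w)<\delta_0$ imply
\[
d(f^n(w), f^n(z)) \ge (m(df^n(z))-\eta)\, d(w,z).
\]

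Second I would tune $\eta$ so that the average hypothesis feeds productively into this one-step bound. Set $\eta_0:=e^{-L}(1-e^{-\beta/2})$ and $\delta:=\delta_0(n,\eta_0)$. Since $m(df^n(z))\ge e^{-L}$, the choice of $\eta_0$ gives $\eta_0/m(df^n(z))\le 1-e^{-\beta/2}$, hence
\[
\log(m(df^n(z))-\eta_0) \ge \log m(df^n(z)) - \beta/2
\]
for every $z\in\Lambda$. Iterating the one-step estimate $N$ times along the $f^n$-orbit of $x$ then yields
\[
d(f^{nN}(x), f^{nN}(y)) \ge \exp\Big(\sum_{k=0}^{N-1}\log(m(df^n(f^{nk}(x)))-\eta_0)\Big)\, d(x,y).
\]
For $\mu$-a.e.\ $x$, combining the cut-off bound with the liminf hypothesis defining $\cN(f^n,\beta)$ makes the exponent grow asymptotically at rate at least $\beta/2$, so the left-hand side diverges, contradicting $d(f^{nN}(x), f^{nN}(y))\le\varepsilon$ unless $d(x,y)=0$. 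Thus $x=y$.

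The main obstacle is that individual values $m(df^n(f^{nk}(x)))$ can a priori be arbitrarily close to $\eta$, so a naive $\prod(m-\eta)$ bound could collapse at some orbit times. The decisive quantitative point is the choice $\eta_0\le e^{-L}(1-e^{-\beta/2})$ relative to the uniform floor $e^{-L}$ on $m(df^n)$, which keeps the cut-off potential dominated by the original one uniformly in $z\in\Lambda$ and preserves the average expansion rate up to a factor of $\tfrac{1}{2}$.
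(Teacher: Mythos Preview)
Your argument is correct, but it follows a genuinely different route from the paper's. The paper invokes the Pliss lemma to produce, for $\mu$-a.e.\ $x$, infinitely many \emph{hyperbolic times} $m_k$ along the $f^n$-orbit; at each such time the inverse branches of $f^{nm_k}$ contract uniformly, which forces the ``local unstable manifold'' at $f^{nm_k}(x)$ to have size bounded below by a constant $\delta(n,\beta)$, and expansivity follows. By contrast, you bypass hyperbolic times entirely: you exploit that the local-diffeomorphism hypothesis on a compact neighborhood of $\Lambda$ yields a uniform floor $m(df^n(z))\ge e^{-L}$, choose $\eta_0\le e^{-L}(1-e^{-\beta/2})$ so that $\log(m(df^n)-\eta_0)\ge\log m(df^n)-\beta/2$ pointwise, and iterate a single $C^1$ mean-value estimate to turn the averaged hypothesis directly into exponential separation. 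Your approach is more elementary and self-contained---it needs neither the Pliss lemma nor any unstable-manifold statement---while the paper's hyperbolic-times argument is the standard machinery in the non-uniformly expanding literature and is more robust toward generalizations (for instance, to maps with critical or singular sets, where no uniform lower bound on $m(df^n)$ is available and your cut-off trick would break down).
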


\begin{proof}
  	We only sketch the proof and refer to~\cite{Oli:03} for further details.
	The claimed property is a consequence of the fact that for every $x$ and every so-called hyperbolic time $n_k\in\bN$ for $x$ with exponent $\alpha$ the size of the local unstable manifolds at  $f^{n_k}(x)$ are uniformly bounded away from zero. 
	Here a number $m\in\bN$ is called a hyperbolic time for $x$ with exponent $\alpha$ if $\lVert (df^k(f^m(x)))^{-1}\rVert^{-1}\ge e^{k\alpha}$ for every $1\le k\le m$.	It is an immediate consequence of the Pliss lemma~\cite{Pli:72} that for every point $x$ with strictly positive Lyapunov exponents greater than $\alpha$ there exist infinitely many hyperbolic times with exponent $\alpha$.
\end{proof}

\begin{theorem}\label{theorem:lisa}
  Let $f\colon M\to M$ be a $C^{1+\varepsilon}$ local diffeomorphism,  let $\Lambda\subset M$ be a locally maximal invariant set, and let $\varphi\in C(\Lambda,\bR)$. Assume that there exist $\rho>0$, $\beta>0$, and $n\in\bN$ such that $\alpha(\varphi)>\rho \,h_{\rm top}(f)$ and every measure $\nu\in\cM_{\rm E}$ satisfying $h_\nu(f)>\rho \,h_{\rm top}(f)$ is $f^n$-expanding with exponent $\beta$. Then there exists an equilibrium state of the potential $\varphi$. Moreover, if $f|\Lambda$ is topologically transitive then $\mu$ is
the unique equilibrium state of $\varphi$ in which case $\mu$ is a Bowen measure. 
\end{theorem}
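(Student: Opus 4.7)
The strategy is to apply the variational principle to an ergodic sequence of near-maximal pressure, exploit the gap $\alpha(\varphi)>\rho\,h_{\rm top}(f)$ to confine that sequence to the family $\cN(f^n,\beta)$, and then use Lemma~\ref{lem:2} to carry out a Misiurewicz-style argument giving precisely the upper semi-continuity of the entropy map that is needed to extract an equilibrium state in the limit. Concretely, I pick $\mu_k\in\cM_{\rm E}$ with $h_{\mu_k}(f)+\int_\Lambda\varphi\,d\mu_k\to P_{\rm top}(\varphi)$. Since $\int_\Lambda\varphi\,d\mu_k\le P_{\rm top}(\varphi)-\alpha(\varphi)$ by the definition of $\alpha(\varphi)$, one gets $h_{\mu_k}(f)>\rho\,h_{\rm top}(f)$ for all sufficiently large $k$, whence $\mu_k\in\cN(f^n,\beta)$ by hypothesis.

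Let $\mu\in\cM$ be a weak$\ast$ accumulation point of $(\mu_k)$, and let $\delta=\delta(n,\beta)$ be the constant from Lemma~\ref{lem:2}. I then choose a finite partition $\xi$ of $\Lambda$ with $\diam\xi<\delta$ and $\mu(\partial\xi)=0$ (the latter by the standard boundary-perturbation trick). Lemma~\ref{lem:2} guarantees that each $\mu_k$ is carried by a full-measure set on which $f$ is expansive with constant $\delta$, so the classical Kolmogorov--Sinai argument yields $h_{\mu_k}(f)=h_{\mu_k}(f,\xi)$. The classical upper semi-continuity of $\nu\mapsto h_\nu(f,\xi)$ at measures with $\nu(\partial\xi)=0$ now gives
\[
h_\mu(f)\geq h_\mu(f,\xi)\geq\limsup_{k\to\infty}h_{\mu_k}(f,\xi)=\limsup_{k\to\infty}h_{\mu_k}(f).
\]
Combined with continuity of $\nu\mapsto\int_\Lambda\varphi\,d\nu$, this shows $h_\mu(f)+\int_\Lambda\varphi\,d\mu\geq P_{\rm top}(\varphi)$, proving the existence of an equilibrium state.

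For the uniqueness assertion under topological transitivity, I note that any ergodic equilibrium state has entropy exceeding $\rho\,h_{\rm top}(f)$ and therefore lies in $\cN(f^n,\beta)$. Following Oliveira~\cite{Oli:03}, I would build an induced full-branched (countable) Markov scheme over a set of positive measure using the hyperbolic-times machinery sketched in the proof of Lemma~\ref{lem:2} (Pliss), lift topological transitivity of $f|\Lambda$ to an appropriate mixing-type property of the induced system, and invoke the uniqueness theory for equilibrium states on countable Markov shifts to conclude that two ergodic equilibrium states must coincide. Once uniqueness is secured, the Bowen measure property follows from Theorem~\ref{thmut} in the form of Remark~\ref{rem:3}: only upper semi-continuity of the entropy map at $\mu$ and at measures approaching $\mu$ with pressure tending to $P_{\rm top}(\varphi)$ matters, and all such measures lie in $\cN(f^n,\beta)$ by the first paragraph, so the partition argument above supplies the required semi-continuity.

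The main obstacle is the uniqueness step. Existence and the Bowen property reduce cleanly to the semi-continuity furnished by Lemma~\ref{lem:2}, but uniqueness genuinely requires producing a symbolic/Markov model of the dynamics from the non-uniform expansion, and this is the substantial technical input where one must follow the constructions of~\cite{Oli:03} in detail.
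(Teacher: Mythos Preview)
Your proposal is correct and follows essentially the same line as the paper's own proof: pick an ergodic sequence approaching the pressure, use the gap $\alpha(\varphi)>\rho\,h_{\rm top}(f)$ to force large entropy and hence $f^n$-expansion, apply Lemma~\ref{lem:2} to make a small-diameter partition generating for each $\mu_k$, and run the standard upper semi-continuity argument at a weak$\ast$ limit; uniqueness is deferred to~\cite{Oli:03} and the Bowen property to Theorem~\ref{thmut} via Remark~\ref{rem:3}. Your invocation of Remark~\ref{rem:3} is in fact slightly more precise than the paper, which cites Theorem~\ref{thmut} directly even though only the weaker semi-continuity along the approximating sequence is available.
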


\begin{proof}
	Let $\varphi\in C(\Lambda,\bR)$ and let $\rho, \beta$ and $n$ as in the theorem. By the variational principle there exists a sequence $(\mu_\ell)_{\ell\in\bN}$   in $\cM_E$ with
\begin{equation}
 P_{\rm top}(\varphi)=\lim_{\ell\to\infty} h_{\mu_\ell}(f)+\int_\Lambda \varphi \,d\mu_{\ell}.
\end{equation}
Let $\epsilon>0$ such that $\alpha(\varphi)-\epsilon >\rho \,h_{\rm top}(f)$.
Then for $\ell$ large enough (say $\ell\ge N$ for some $N\in \bN$) we obtain
\begin{equation}\label{htiek}
h_{\mu_\ell}(f)\ge P_{\rm top}(\varphi)-\int_\Lambda \varphi \,d\mu_{\ell}-\varepsilon
\ge \alpha(\varphi)-\varepsilon > \rho\,h_{\rm top}(f).
\end{equation}
We conclude that each of the measures $\mu_{\ell}, \ell \geq N$ is $f^n$-expanding with exponent $\beta$.
Let $\mu\in \cM$ be any weak$\ast$ accumulation point of the sequence of measures  $(\mu_\ell)_{\ell\in \bN}$.  
Let $\delta=\delta(n,\beta)$ be the number given by Lemma~\ref{lem:2} and let $\cP$ be some finite partition of $\Lambda$ into measurable sets of diameter less than $\delta$, and assume that $\mu(\partial\cP)=0$.
Note that by~\eqref{htiek} and by~\cite[Corollary 5.3.]{Oli:03} the partition $\cP$ of $\Lambda$ into measurable sets with diameter less than $\delta$ is generating for every $\mu_\ell$, $\ell\ge N$. Therefore, $h_{\mu_\ell}(f)=h_{\mu_\ell}(f,\cP)$.

We now establish the upper semi-continuity of the entropy map on the
particular set of measures $\{\mu_\ell\colon \ell\in\bN\}$. 
We have
\[
h_\mu(f) \ge h_\mu(f,\cP)
=\lim_{m\to\infty}\frac{1}{m}
H_\mu\left(\bigvee_{k=0}^{m-1}f^{-k}\cP\right).
\]
From the weak$\ast$ convergence $\mu_\ell\to\mu$ and from
$\mu(\partial\cP)=0$ it follows that for every fixed $m\in\bN$ 
\[
\frac{1}{m} H_\mu\left(\bigvee_{k=0}^{m-1}f^{-k}\cP\right)
= \lim_{\ell\to\infty} \frac{1}{m}
H_{\mu_\ell}\left(\bigvee_{k=0}^{m-1}f^{-k}\cP\right).
\]
Note that for every fixed $\ell\geq N$ the value of
$\frac{1}{m}H_{\mu_\ell}(\bigvee_{k=0}^{m-1}f^{-k}\cP)$ 
monotonically decreases to $h_{\mu_\ell}(f,\cP)$ as $m$ increases. 
Moreover, 
$h_{\mu_\ell}(f,\cP)= h_{\mu_\ell}(f)$, and we conclude $h_\mu(f)\ge \lim_{\ell\to\infty} h_{\mu_\ell}(f)$. From here we obtain that $\mu$ is an equilibrium state of $\varphi$. Finally, if $f|\Lambda$ is topologically transitive then by the same arguments as in  \cite{Oli:03} we obtain that $\mu$ is the unique equilibrium state
and therefore Theorem \ref{thmut} implies that $\mu$ is a Bowen measure.
 \end{proof}

\begin{example}{\rm 
	Oliveira introduced in \cite{Oli:03} a class of maps satisfying the hypothesis of Theorem~\ref{theorem:lisa}. Examples of these maps can be obtained from sufficiently small perturbations of  expanding maps. 
    Consider  a $C^{1+\varepsilon}$ local diffeomorphism $f$ on a compact  Riemannian manifold  $M$. Let $\cP\eqdef\{B_1$, $\ldots$, $B_p$, $B_{p+1}$, $\ldots$, $B_{p+q}\}$ be a covering of $M$ by connected closed sets such that $f|B_i$ is injective for each $1\le i\le  p+q$, that $\cP$ is transitive, and that there exist numbers $\sigma>q$, $\beta>0$, $\delta_0>0$, and $\gamma>0$ satisfying\\[-0.4cm]
\begin{itemize}
\item [] $\lvert \det df(x)\rvert\ge \sigma$ for every $x\in M$ (volume expansion everywhere),\\[-0.3cm]
\item []  $\lVert df(x)^{-1}\rVert\le 1+\delta_0$ for every $x\in M$ (not too strong contraction everywhere), \\[-0.3cm]
\item [] $\lVert df(x)^{-1}\rVert\le (1+\delta_1)^{-1}$ 
  if $x\in B_1\cup\ldots\cup B_p$ (some uniform expansion).
\end{itemize}
Assume further that  there exists a set $W\subset B_{p+1}\cup\ldots\cup B_{p+q}$ which contains the set $V$ given by 
  \[
  V\eqdef\{x\in M\colon \lVert df(x)^{-1}\rVert>(1+\delta_1)^{-1}\},
  \]
such that 
  \[
  \sup_V\log\,\lvert\det df\rvert < 
  \inf_{M\setminus W}\log\,\lvert\det df\rvert,
  \quad
  \sup_V\log\,\lvert df\rvert - \inf_V\log\,\lvert df\rvert < \beta.
  \]
Assuming that $\delta_0$, $\beta$, and $\rho>0$ are sufficiently small, it now follows from~\cite{Oli:03}  that for any continuous potential $\varphi$ which has $\rho$-low variation there exists an  equilibrium state $\mu_\varphi$. Moreover, $\mu_\varphi$ is unique and hence  Theorem \ref{thmut} implies that $\mu_\varphi$ is a Bowen measure.  
}\end{example}

%----------------------------------------------------------
\section{Large deviations}\label{sec:LD}
%----------------------------------------------------------

In this section we consider a $C^{1+\varepsilon}$ map  $f:M\to M$ and a fixed compact $f$-invariant local maximal set $\Lambda\subset M$.
It is a consequence of Proposition~\ref{buthm} that  for any $\mu\in \cM^+_E$ with $h_\nu(f)>0$, any $\varphi\in C(\Lambda,\bR)$, and any given $\varepsilon>0$ there exists an expanding periodic orbit, say of period $n$, satisfying 
\[
\left\lvert \frac{1}{n}\sum_{k=0}^{n-1}\varphi(f^k(x))- \int_\Lambda\varphi \,d\mu\right\rvert 
		<\varepsilon.
\]
We now are interested in the periodic orbits that \emph{do not} have such an approximation
property and  will derive  results on the distribution of such orbits. This can be interpreted as a level-2 large deviation analysis. 

In order to formalize our large deviation results, we first introduce a rate
function. Closely related approaches can be found in~\cite{Yur:05,Pol:95}. 
Let $\cP(\Lambda)$ be the space of all Borel probability measures on $\Lambda$ endowed with the topology of weak$\ast$ convergence.
Given  $\varphi\in C(\Lambda,\bR)$, we define $Q_\varphi\colon C(\Lambda,\bR)\to
\bR$ by  
 \[
	Q _\varphi(\psi)= P_{\rm top}(\varphi+\psi)-P_{\rm top}(\varphi).
\] 
 We note that
$Q _\varphi$ is a continuous and convex functional. 
Within the standard framework of the theory of conjugating functions (see for
example~\cite{AubEke:84}), $Q_\varphi$ can be characterized by  
\[
Q_\varphi(\psi) = 
\sup_{\nu\in\cP} \left(\int_\Lambda\psi \,d\nu - I_\varphi(\nu)\right),
\]
 where $I_\varphi$ is the convex conjugate  of $Q_\varphi$  defined by 
\begin{equation}\label{Idef}
	I_\varphi(\mu) =
	\sup_{\psi\in C(\Lambda,\bR)}\left( \int_\Lambda\psi \,d\mu - Q_\varphi(\psi)
        \right) 
\end{equation}
for all $\mu\in\cM$ and $I_\varphi(\mu) =\infty$ for any other signed measure $\mu$. Since $I_\varphi\colon \cP \to \bR$ is a pointwise supremum of 
continuous and affine functions, it is a  lower semi-continuous  and convex
functional. 

Given $\nu\in \cP$, we call
\begin{equation}\label{defunas}
\widehat h_\nu(f) \eqdef 
-\sup_{\psi\in C(\Lambda,\bR)}
\left(\int_\Lambda\psi \,d\nu - P_{\rm top}(\psi)\right)
\end{equation}
the \emph{generalized entropy} of $f$ with respect to $\nu$. We call $\mu\in\cP$ a \emph{generalized equilibrium state} for $\varphi\in C(\Lambda,\bR)$ if $P_{\rm top}(\varphi)=\widehat h_\mu(f)+\int_\Lambda\varphi\,d\mu$. It follows from
the  definition that 
\begin{equation}\label{engen}
h_\nu(f)\le\widehat h_\nu(f)
\end{equation} 
for every $\nu\in \cM$. 
The choice of the above denotation is justified by the ``dual variational
principle" which says that the entropy map $\mu\mapsto h_\mu(f)$ is upper
semi-continuous at $\nu\in\cM$ if and only if $	h_\nu(f) = \widehat h_\nu(f)$
(see~\cite{Wal:81}). 
We clearly have 
 $I_\varphi(\mu)\geq 0$
for every $\mu\in\cM$. Moreover,
\[\begin{split}
P_{\rm top}(\varphi)& - 
\left(\widehat h_\mu(f)+\int_\Lambda\varphi \,d\mu \right) \\
&= P_{\rm top}(\varphi) - \int_\Lambda\varphi \,d\mu 
+ \sup_{\psi\in C(\Lambda,\bR)}\left(\int_\Lambda(\psi+\varphi) \,d\mu - P_{\rm top}(\psi+\varphi)\right)\\
&=  \sup_{\psi\in C(\Lambda,\bR)}\left(\int_\Lambda \psi \,d\mu 
- P_{\rm top}(\psi+\varphi)+P_{\rm top}(\varphi)\right) = I_\varphi(\mu)
\end{split}\]
and hence that
$I_\varphi(\mu)=0$ if and only if $\mu$ is a generalized
equilibrium state for $\varphi$. By this property, on $\cM$ the functional $I_\varphi$ can be interpreted as the
``distance" of the measure $\mu$ from the set of all generalized equilibrium states of $\varphi$. Moreover,
if the entropy map is upper semi-continuous at $\mu\in\cM$ then 
\begin{equation}
 I_\varphi(\mu) = 
P_{\rm top}(\varphi) - \left(h_\mu(f)+\int_\Lambda\varphi \,d\mu\right).
 \end{equation}
We notice that in the case that $\nu_\varphi$ is an equilibrium Gibbs measure of a potential $\varphi$, $I_{\varphi}(\mu)$ is the relative entropy of $\mu\in\cM$ given $\nu_\varphi$, and
$I_{\varphi}(\mu)=0$ if and only if $\mu=\nu_\varphi$.

First we derive a more general large deviation result in the context of continuous maps on compact metric spaces.
We refer to the work of Pollicott~\cite{Pol:95} for a related result in the context of uniformly hyperbolic flows.
For a given $x\in\Fix(f^n)$ we use the notation $\omega_x\eqdef \frac{1}{n}\left(\delta_x+\ldots +\delta_{f^{n-1}(x)}\right)$.

\begin{theorem}\label{the:Pol}
  	Let $f\colon X\to X$ be a continuous map on a compact metric space $X$
        with $h_{\rm top}(f)<\infty$. Let
        $Z\subset X$ be compact  such that $f|Z$ is expansive and let $\varphi\in C(X,\bR)$. Then for any
        compact set $\cK\subset\cM$ of Borel $f$-invariant probability measures on $X$ 
        we have 
	\begin{equation}\label{kinggo}
	\limsup_{n\to\infty}
	\frac{1}{n}\log\sum\limits_{x\in\Fix(f^n)\cap Z,\omega_x\in \cK}
	\exp S_{n}\varphi(x)
        \le P_{\rm top}(\varphi)- \inf_{\nu\in\cK} I_\varphi(\nu).
        \end{equation}
\end{theorem}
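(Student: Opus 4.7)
The plan is to exploit the Fenchel-type characterization $I_\varphi(\nu)=\sup_{\psi}(\int\psi\,d\nu - Q_\varphi(\psi))$ to produce, on each piece of a finite cover of $\cK$, a test function that twists the sum $\sum \exp S_n\varphi(x)$ into one bounded by a topological pressure. The hinge between separated-set estimates of pressure and the periodic-orbit sum will be provided by expansivity of $f|Z$.

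Set $c=\inf_{\nu\in\cK}I_\varphi(\nu)$ and fix $\epsilon>0$ (the case $c=0$ is already covered by the variational principle). For each $\nu\in\cK$, since $I_\varphi(\nu)\ge c>c-\epsilon$, equation~\eqref{Idef} supplies a function $\psi_\nu\in C(X,\bR)$ with $\int\psi_\nu\,d\nu-Q_\varphi(\psi_\nu)>c-\epsilon$. The weak$\ast$ neighborhood
\begin{equation*}
U_\nu=\Bigl\{\mu\in\cP(X)\colon \Bigl\lvert\int\psi_\nu\,d\mu-\int\psi_\nu\,d\nu\Bigr\rvert<\epsilon\Bigr\}
\end{equation*}
covers $\nu$, and compactness of $\cK$ yields a finite subcover $U_{\nu_1},\ldots,U_{\nu_k}$.

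Next, let $\eta>0$ be an expansivity constant for $f|Z$. For distinct $x,y\in\Fix(f^n)\cap Z$, periodicity reduces the expansivity condition to $\max_{0\le j<n}d(f^j(x),f^j(y))>\eta$, so $\Fix(f^n)\cap Z$ is an $(n,\eta)$-separated subset of $X$. Consequently, for every $\psi\in C(X,\bR)$,
\begin{equation*}
\limsup_{n\to\infty}\frac{1}{n}\log\sum_{x\in\Fix(f^n)\cap Z}\exp S_n\psi(x)\le P_{\rm top}(\psi)
\end{equation*}
by the definition of topological pressure via separated sets. For $x\in\Fix(f^n)\cap Z$ with $\omega_x\in U_{\nu_i}$ we have $\frac{1}{n}S_n\psi_{\nu_i}(x)=\int\psi_{\nu_i}\,d\omega_x>\int\psi_{\nu_i}\,d\nu_i-\epsilon$, so
\begin{equation*}
\exp S_n\varphi(x)\le \exp\Bigl(-n\Bigl(\int\psi_{\nu_i}\,d\nu_i-\epsilon\Bigr)\Bigr)\exp S_n(\varphi+\psi_{\nu_i})(x).
\end{equation*}
Summing over such $x$, invoking the pressure bound with $\psi=\varphi+\psi_{\nu_i}$, and using $P_{\rm top}(\varphi+\psi_{\nu_i})=P_{\rm top}(\varphi)+Q_\varphi(\psi_{\nu_i})$ yields
\begin{equation*}
\limsup_{n\to\infty}\frac{1}{n}\log\sum_{\substack{x\in\Fix(f^n)\cap Z\\\omega_x\in U_{\nu_i}}}\exp S_n\varphi(x)\le P_{\rm top}(\varphi)-\Bigl(\int\psi_{\nu_i}\,d\nu_i-Q_\varphi(\psi_{\nu_i})\Bigr)+\epsilon<P_{\rm top}(\varphi)-c+2\epsilon.
\end{equation*}

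To conclude, note that the sum over $\omega_x\in\cK$ is bounded by the sum of the $k$ contributions from the cover, and $\frac{1}{n}\log k\to 0$, so the same upper bound applies to the full sum. Letting $\epsilon\to 0$ yields~\eqref{kinggo}. The main delicate point is the interface between the local data on $Z$ and the global pressure on $X$: expansivity of $f|Z$ is needed precisely to turn the period-$n$ orbits into a separated set of $X$, and the standard separated-set characterization then allows one to bound the twisted sum by $P_{\rm top}(\varphi+\psi_{\nu_i})$ on the whole space, which is what is required to produce $P_{\rm top}(\varphi)$ rather than $P_{\rm top}(f|Z,\varphi)$ on the right-hand side.
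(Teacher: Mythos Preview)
Your argument is correct and follows essentially the same route as the paper: use expansivity of $f|Z$ to make $\Fix(f^n)\cap Z$ an $(n,\eta)$-separated set, pick for each $\nu\in\cK$ a test function $\psi_\nu$ nearly realizing $I_\varphi(\nu)$, extract a finite cover by compactness, twist the Birkhoff sum by $\psi_{\nu_i}$ on each piece, and bound by $P_{\rm top}(\varphi+\psi_{\nu_i})$. The only cosmetic difference is that the paper takes as open sets the half-spaces $\{\nu:\int\psi_i\,d\nu-Q_\varphi(\psi_i)>\rho-\varepsilon\}$ directly, while you use the weak$\ast$ balls $\{\mu:|\int\psi_\nu\,d\mu-\int\psi_\nu\,d\nu|<\epsilon\}$; this costs you an extra $\epsilon$ in the final bound but is immaterial after letting $\epsilon\to 0$. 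Your parenthetical about $c=0$ is unnecessary, since the same argument covers that case too.
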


\begin{proof}
Let $\delta$ be an expansivity constant  for $f|Z $. Consider
  $x,y\in Z$ with $f^n(x)=x$ and $f^n(y)=y$  satisfying
  $d(f^k(x),f^k(y))\le \delta$ for every $0\le k\le n-1$. It follows
  that $d(f^k(x),f^k(y))\le \delta$ for all $k\in\bN$, and hence
  $x=y$. Therefore, the set $\Fix(f^n)\cap  Z$ is
  $(n,\delta)$-separated, and thus there exists a maximal $(n,\delta)$-separated set $F_n(\delta)$ (with respect to $f$ on $X$) with $\card \left(\Fix(f^n)\cap  Z\right) \le \card \left(F_n(\delta)\cap Z\right)<\infty$. This implies
  \begin{equation}\label{bekha}
  \limsup_{n\to\infty}\frac{1}{n}\log
    \sum\limits_{x\in\Fix(f^n)\cap Z}\exp{S_{n}\varphi(x)}\\ 
    \le P_{\rm top}(\varphi).  
  \end{equation}

  Let $\rho\eqdef\inf_{\nu\in\cK}I_\varphi(\nu)$. Fix 
  $\varepsilon>0$. Given $\nu\in\cK$, by definition~\eqref{Idef} we have 
  \begin{equation}
  \rho\le\sup_{\psi\in C(X,\bR)}\left(\int_X\psi \,d\nu-Q_\varphi(\psi)\right).
  \end{equation}
  Hence there exists $\psi=\psi(\nu,\varepsilon)\in C(X,\bR)$ such that $\rho -\varepsilon <
  \int_X \psi \,d\nu -Q_\varphi(\psi)$.
  Thus, we obtain that
  \begin{equation}
  \cK \subset \bigcup_{\psi\in  C(X,\bR)}
  \left\{\nu\in\cM\colon \int_X\psi \,d\nu -
    Q_{\varphi}(\psi)>\rho-\varepsilon\right\}. 
  \end{equation}
  By compactness of $\cK$ there exists a finite cover
  $\cU_1$, $\ldots$, $\cU_N$ of $\cK$ determined by functions $\psi_1$,
  $\ldots$, $\psi_N\in C(X,\bR)$ by 
  \begin{equation}
  \cU_i\eqdef	\left\{\nu\in\cM\colon 
    \int_X\psi_i \,d\nu - Q_\varphi(\psi_i)-\rho +\varepsilon
    >0 \right\}.
  \end{equation}
  We now can conclude that
  \[
\begin{split}
&  \sum\limits_{x\in\Fix(f^n)\cap Z,\omega_x\in \cK}
    \exp {S_n\varphi(x)}
    \\
&\phantom{\sum} \le 
    \sum_{i=1}^N 
    \sum\limits_{x\in\Fix(f^n)\cap Z,\omega_x\in \cU_i}\exp {S_n\varphi(x)}\\
  & \phantom{\sum} \le 
    \sum_{i=1}^N 
    \sum\limits_{x\in\Fix(f^n)\cap Z ,\omega_x\in \cU_i}\exp{S_n\varphi(x)}\cdot
    \exp \left[n\left(\int_X \psi_i \,d\omega_x - Q_\varphi(\psi_i)-\rho+\varepsilon\right)\right]\\
    & \phantom{\sum} \le 
    \sum_{i=1}^N \exp [-n(Q_\varphi(\psi_i)+\rho-\varepsilon)]	         
    \sum\limits_{x\in\Fix(f^n)\cap Z }
    \exp {S_n(\varphi+\psi_i)(x)}.
  \end{split}\]
  From \eqref{bekha} we derive that
  \begin{equation}
\begin{split}
    \limsup_{n\to\infty}&\frac{1}{n}\log
    \sum\limits_{x\in\Fix(f^n)\cap Z,\omega_x\in
      \cK}\exp{S_{n}\varphi(x)}\\ 
    &\le \max_{1\le i\le N}
    \left\{ -Q_\varphi(\psi_i)-\rho+\varepsilon+P_{\rm top}(\varphi+\psi_i)\right\}\\  
    &= P_{\rm top}(\varphi)-\rho+\varepsilon		      	.
  \end{split}\end{equation}
  Since $\varepsilon>0$ was arbitrary, the statement is
  proved.
\end{proof}

As an application of Theorem \ref{the:Pol} we finally derive a large deviation result. 

\begin{theorem}\label{theorem:ld}
	Let $f\colon M\to M$ be a $C^{1+\varepsilon}$ map and let $\Lambda\subset M$ be a locally maximal invariant set. Let $\mu\in\cM^+_{\rm E}$ with $h_\mu(f)>0$ and $\varphi\in C(\Lambda,\bR)$.  Then for all  $\alpha>0$ and  $\delta>0$ we have
	\begin{multline}\label{kingo}
	\lim_{\ell\to\infty}\limsup_{n\to\infty}\frac{1}{n}\log
	{\rm card}\left\{x\in\EFix(f^n,\alpha,\ell) \colon 
	\left\lvert \frac{1}{n}S_n\varphi(x) - \int_\Lambda\varphi \,d\mu\right\rvert \ge \delta \right\}\\
	        \le 
		\sup\left\{\widehat h_\nu(f)\colon 
	        \nu\in\cM, \left\lvert \int_\Lambda\varphi \,d\nu-\int_\Lambda\varphi \,d\mu\right\rvert
	        \ge \delta \right\}.
	\end{multline}
\end{theorem}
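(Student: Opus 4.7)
The plan is to reduce the statement to a direct application of Theorem \ref{the:Pol} by restricting attention to the compact uniformly expanding set $K=K_{\alpha,\ell}$ introduced in \eqref{wagner}. First, I would fix $\alpha>0$ and $\ell\in\bN$ large enough that $\EFix(f^n,\alpha,\ell)\ne\emptyset$ for some $n$. Since $K$ is compact and uniformly expanding, $f|K$ is expansive with some expansivity constant $\varepsilon_\ell>0$; moreover, by \eqref{eqsi}, $\Fix(f^n)\cap K=\EFix(f^n,\alpha,\ell)$ for every $n\in\bN$. Thus the cardinality on the left-hand side of \eqref{kingo} counts precisely the fixed points of $f^n$ inside a compact set on which $f$ is expansive, which is the setting of Theorem \ref{the:Pol}.

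Next, I would introduce
\[
\cK=\left\{\nu\in\cM\colon \left\lvert \int_\Lambda\varphi\,d\nu-\int_\Lambda\varphi\,d\mu\right\rvert\ge\delta\right\}.
\]
Because $\nu\mapsto\int_\Lambda\varphi\,d\nu$ is weak$\ast$-continuous, $\cK$ is closed in the compact space $\cM$, hence itself compact. For $x\in\EFix(f^n,\alpha,\ell)$, the orbit measure $\omega_x$ satisfies $\int_\Lambda\varphi\,d\omega_x=\frac{1}{n}S_n\varphi(x)$, so the deviation condition appearing in the theorem translates exactly into the requirement $\omega_x\in\cK$.

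With these identifications in place, the plan is to apply Theorem \ref{the:Pol} with $X=\Lambda$, $Z=K$, the compact set $\cK$ above, and the constant potential $\varphi_0\equiv 0$. Note that $h_{\rm top}(f|\Lambda)<\infty$ holds by Ruelle's inequality applied to the $C^{1+\varepsilon}$ map $f$ on the compact set $\Lambda$, so all the hypotheses of Theorem \ref{the:Pol} are met. Since $P_{\rm top}(0)=h_{\rm top}(f)$ and $I_0(\nu)=h_{\rm top}(f)-\widehat h_\nu(f)$, Theorem \ref{the:Pol} yields
\[
\limsup_{n\to\infty}\frac{1}{n}\log{\rm card}\left\{x\in\EFix(f^n,\alpha,\ell)\colon\omega_x\in\cK\right\}\le\sup_{\nu\in\cK}\widehat h_\nu(f).
\]
Since the right-hand side does not depend on $\ell$, taking $\lim_{\ell\to\infty}$ preserves the inequality and produces \eqref{kingo}.

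The proof is therefore essentially a direct reduction to Theorem \ref{the:Pol}; the only steps requiring care are the identification $\Fix(f^n)\cap K=\EFix(f^n,\alpha,\ell)$ via \eqref{eqsi}, the recognition that the deviation condition on $x$ is exactly the membership of $\omega_x$ in the compact set $\cK$ of invariant measures, and the observation that the resulting upper bound has no $\ell$-dependence so the outer limit is harmless. I do not anticipate any further substantive obstacle beyond this bookkeeping.
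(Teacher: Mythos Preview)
Your proposal is correct and follows essentially the same approach as the paper: define the compact set $\cK$ of invariant measures violating the deviation bound, apply Theorem~\ref{the:Pol} with $Z=K_{\alpha,\ell}$ and the zero potential, and use the identity $P_{\rm top}(0)-\inf_{\nu\in\cK}I_0(\nu)=\sup_{\nu\in\cK}\widehat h_\nu(f)$. Your write-up is in fact more careful than the paper's own proof in spelling out why $\cK$ is compact, why the deviation condition is equivalent to $\omega_x\in\cK$, and why the bound is independent of $\ell$.
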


\begin{proof}
  Let $\cK=\left\{\nu\in\cM\colon \left\lvert \int_\Lambda\varphi \,d\nu-\int_\Lambda\varphi \,d\mu\right\rvert\ge \delta\right\}$.
  Note that, by definition we have 
\[
h_{\rm top}(f) - \inf_{\nu\in\cK}I_0(\nu)
= h_{\rm top}(f) - \sup_{\psi\in C(\Lambda,\bR)}
\left( \int_\Lambda\psi\,d\nu - P_{\rm top}(\psi)+h_{\rm top}(f) \right) = 
\widehat h_\nu(f).
\]
 The proof follows now from an application of Theorem \ref{the:Pol} with $Z=K_{\alpha,\ell}$.
\end{proof}

%-------------------------------------------------------------------------------


\begin{thebibliography}{11}

\bibitem{AubEke:84}  J.-P.~Aubin and I.~Ekeland, \emph{Applied Nonlinear Analysis}, 
Pure and Applied Mathematics, John Wiley \& Sons, 1984.

\bibitem{BedLyuSmi:93} E.~Bedford, M.~ Lyubich, and J.~Smillie, \emph{Distribution of periodic points of polynomial diffeomorphisms of $\mathbf C\sp 2$}, 
Invent. Math. \textbf{114} (1993), 277--288. 

\bibitem{Buz:} J.~Buzzi, \emph{On entropy-expanding maps}, preprint.

\bibitem{Bow:71} R.~Bowen, \emph{Periodic points and measures for axiom A
    diffeomorphisms}, Trans. Amer. Math. Soc. \textbf{154} (1971), 377--397. 

\bibitem{BriKat:83} M.~Brin and A.~Katok, \emph{On local entropy}, in
  Lect. Notes Math. \textbf{1007}, Berlin: Springer, 1983. 
 
\bibitem{Gel:} K.~Gelfert, \emph{Horseshoes for non-uniformly expanding maps with singularities}, preprint.

\bibitem{GelWol:08} K.~Gelfert and C.~Wolf, \emph{Topological pressure via saddle
    orbits}, Trans. Amer. Math. Soc. \textbf{360} (2008), 545--561. 

\bibitem{Kat:80} A.~Katok, \emph{Lyapunov exponents, entropy and periodic
    orbits for diffeomorphisms}, Publ. Math., Inst. Hautes
  \'Etud. Sci. \textbf{51} (1980), 137--173. 

\bibitem{KatHas:95} A.~Katok and B.~Hasselblatt, \emph{Introduction to the
    Modern Theory of Dynamical Systems}, Encyclopedia of Mathematics
  and Its Applications \textbf{54}, Cambridge University Press, 1995.

\bibitem{KatMes:98} A.~Katok and A.~Mezhirov, \emph{Entropy and growth of expanding periodic orbits for one-dimensional maps}, 
Fund. Math. \textbf{157} (1998),  245--254. 

\bibitem{Man:87} R.~Ma\~n\'e, \emph{Ergodic Theory and Differentiable
    Dynamics}, Berlin: Springer, 1987.

\bibitem{Mis:73} M.~Misiurewicz, \emph{Diffeomorphism without any measure with maximal entropy}, Bull. Pol. Acad. Sci. Math. \textbf{21} (1973), 903--910. 
%\bibitem{New:88} S.~Newhouse, \emph{Entropy and volume},  Ergodic Theory
%  Dynam. Syst. \textbf{8} (1988), 283--299.

\bibitem{MisSzl:80} M.~Misiurewicz and W.~Szlenk, \emph{Entropy of piecewise monotone mappings}, Studia Math. \textbf{67}  (1980),  45--63

\bibitem{New:89} S.~Newhouse, \emph{Continuity properties of entropy},
  Ann. of Math. (2) \textbf{129} (1989), 215--235.

\bibitem{Oli:03} K.~Oliveira, \emph{Equilibrium states for non-uniformly
    expanding maps}, Ergodic Theory Dynam. Syst. \textbf{23} (2003),
    1891--1905. 

\bibitem{OliVia:06} K.~Oliviera and M.~Viana, \emph{Existence and uniqueness
  of maximizing measures for robust classes of diffeomorphisms}, Discrete
    Contin. Dyn. Sys. \textbf{15} (2006), 225--236.
    
\bibitem{OliVia:07} K.~Oliviera and M.~Viana, \emph{Thermodynamical formalism for robust classes of potentials and non-uniformly hyperbolic maps}, Ergodic Theory Dynam. Syst. \textbf{28} (2008), 501--533.

\bibitem{Pli:72} V. Pliss, \emph{On a conjecture due to Smale}, Differ. Uravn. \textbf{8} (1972), 262--268.

\bibitem{Pol:95} M.~Pollicott, \emph{Large deviations, Gibbs measures and
    closed orbits for hyperbolic flows}, Math. Z. \textbf{220} (1995),
  219--230. 
\bibitem{UrbPrz:} F.~Przytycki and M.~Urba\'nski, \emph{Conformal Fractal -- Ergodic Theory Methods}, Cambridge University Press, to appear.


\bibitem{PRS1} F. Przytycki, J. Rivera-Letelier and S. Smirnov, \emph{Equivalence and topological invariance of conditions for non-uniform hyperbolicity in the iteration of rational maps}, \emph{Invent. Math.}  151  (2003),  no. 1, 29--63.

%\bibitem{PrzUrb:} F.~Przytycki and M.~Urba\'nski, \emph{From Ergodic Theory to
%  Dimension of Conformal Fractals}, Cambridge:
%  Cambridge Univ. Press, to appear. 

\bibitem{Ru} D.~Ruelle, \emph{Thermodynamic Formalism}, Cambridge:
  Cambridge Univ. Press, 2004. 

\bibitem{U} M.~Urba\'nski, Measures and dimensions in conformal dynamics, \emph{Bull. Amer. Math. Soc.} (N.S.) 40 (2003), 281--321. 

\bibitem{Wal:81} P.~Walters, \emph{An Introduction to Ergodic Theory},
  Graduate Texts in Mathematics 79, Springer, 1981. 

\bibitem{Yur:05} M.~Yuri, \emph{Large deviations for countable to one Markov systems}, Comm. Math. Phys. \textbf{258} (2005), 455--474. 

\end{thebibliography}
\end{document}